\documentclass[fontsize=10pt,twoside ]{scrreprt}

\usepackage[square,numbers,nonamebreak]{natbib}
\usepackage{setspace}

\usepackage[english]{babel}

\usepackage{chngcntr}

\usepackage[hang]{footmisc}
 at 11truept
\font\ssc=pplrc9d at 11 truept
\newcommand\qedbox{$\rlap{$\sqcap$}\sqcup$}

\usepackage[    protrusion=true,
            expansion=true,
            final,
kerning=true,
spacing=true,
factor=1100,stretch=10,shrink=10,
            babel,
	tracking=alltext,
letterspace = 10
                ]{microtype}
\usepackage[hmarginratio=1:1, bottom=2.5cm, top=2.5cm, inner=2cm, outer=2cm, paperwidth=17cm, paperheight=24cm]{geometry}

\usepackage[automark]{scrlayer-scrpage}
\cohead{\textnormal{Permutation modules for Ramsey structures}}
\lehead{\pagemark}
\rohead{\pagemark}

\let\ceheadL\cehead
\renewcommand\cehead[1]{
\ceheadL{\textnormal{#1}}
}

\cehead{David M. Evans}
\lefoot{}
\rofoot{}

\usepackage{graphicx}
\usepackage[usenames,dvipsnames,svgnames,table]{xcolor}
\definecolor{Maroon}{cmyk}{0, 0.87, 0.68, 0.32}
\definecolor{RoyalBlue2}{cmyk}{80,100,0,0.1}

\newcommand\auths[1]{\large \textsc{\textcolor{Maroon}{#1}}\setstretch{1.2}}
\newcommand\titl[1]{\center \linespread{1.1}\color{RoyalBlue2}\Large\textbf{ #1}\color{black}\bigskip} 
\renewcommand\abstract[1]{
\begin{center}
{\textbf{Abstract}}
\end{center}
{
\linespread{1.1}\fontsize{9pt}{-10pt}\selectfont #1}}

\usepackage[utf8]{inputenc}
\usepackage[sc,osf]{mathpazo}
\usepackage[euler-digits]{eulervm}
\usepackage[T1]{fontenc}
\usepackage{mathtools}

\DeclareSymbolFont{operators}{\encodingdefault}{ppl}{m}{n}
\DeclareMathAlphabet{\mathbf}{\encodingdefault}{ppl}{bx}{n}
\DeclareMathAlphabet{\mathit}{\encodingdefault}{ppl}{m}{it}

\usepackage{amsfonts}
\usepackage{amsmath}
\usepackage{ragged2e}
\usepackage{titlesec}

\renewcommand{\thesection}{\arabic{section}}
 
\titleformat{\section}{\medskip\bigskip\normalfont\Large\bf}{\thesection}{0.5em}{}
\titleformat{\subsection}{\smallskip\bigskip\normalfont\large\bf}{\thesubsection}{0.5em}{}
\titleformat{\subsubsection}{\smallskip\bigskip\normalfont\large\bf}{\thesubsubsection}{0.5em}{}

\usepackage{amsthm}
\newtheoremstyle{dotless}{}{}{\itshape}{}{\bfseries}{}{1em}{}

\theoremstyle{dotless}
\newtheorem{theorem}{Theorem}[section]
\newtheorem{lemma}[theorem]{Lemma}
\newtheorem{proposition}[theorem]{Proposition}
\newtheorem{corollary}[theorem]{Corollary}

\newtheorem{definition}[theorem]{Definition}
\newtheorem{remarks}[theorem]{Remarks}

\newtheorem{question}[theorem]{Question}

\counterwithout{theorem}{section}
\numberwithin{theorem}{section}

\DeclareOldFontCommand{\rm}{\normalfont\rmfamily}{\mathrm}
\DeclareOldFontCommand{\sf}{\normalfont\sffamily}{\mathsf}
\DeclareOldFontCommand{\tt}{\normalfont\ttfamily}{\mathtt}
\DeclareOldFontCommand{\bf}{\normalfont\bfseries}{\mathbf}
\DeclareOldFontCommand{\it}{\normalfont\itshape}{\mathit}
\DeclareOldFontCommand{\sl}{\normalfont\slshape}{\@nomath\sl}
\DeclareOldFontCommand{\sc}{\normalfont\scshape}{\@nomath\sc}

\def\Sym{{\rm Sym}}
\def\Z{{\mathbb{Z}}}
\def\N{{\mathbb{N}}}
\def\Q{{\mathbb{Q}}}
\def\R{{\mathbb{R}}}

\def\tp{{\rm tp}}
\def\Aut{{\rm Aut}}

\def\cl{{\rm cl}}

\def\LL{\mathcal{L}}

\def\hF{\widehat{F}}
\def\hR{\widehat{R}}
\def\Ann{{\rm Ann}}
\def\supp{{\rm supp}}
\def\Aug{{\rm Aug}^0}

\def\Q{\mathbb{Q}}
\def\N{\mathbb{N}}
\def\R{\mathbb{R}}
\def\Z{\mathbb{Z}}

\def\C{\mathcal{C}}

\def\h0{\overline{0}}

\def\l{\ell}

\usepackage[hidelinks]{hyperref}
\begin{document}
\hypertarget{first_page}{}\label{first_page}

\setheadsepline{1pt}[\color{black}]

\hskip 0.55 cm
\begin{tabular}{c r}
\vspace{-0.3cm}
\end{tabular}
\hskip 1.2 cm
\begin{tabular}{l}
\hline
\vspace{-0.1cm}
\footnotesize \rm\hspace{0.4cm} \bf Advances in Group Theory and Applications\\
\vspace{-0.1cm}
\scriptsize \hspace{0.47cm} $\copyright$ 20xx AGTA - \href{www.advgrouptheory.com/journal}{www.advgrouptheory.com/journal}\\
\vspace{-0.19cm}
\footnotesize\hspace{0cm} {\bf\rm xx} {\rm(}20yy{\rm)}, pp. \hyperlink{first_page}{\pageref*{first_page}}--\hyperlink{last_page}{\pageref*{last_page}} \scriptsize\rm\hfill\hspace{2.2cm} ISSN: 2499-1287\\
\vspace{0cm}
\scriptsize \rm\hfill\hspace{3cm} DOI: 10.4399/9788854890817xx \\
\hline
\end{tabular}
\hskip 1 cm
\begin{tabular}{c c}
\vspace{-0.1cm}
\vspace{-1cm}
\end{tabular}
\vspace{1.3 cm}

\titl{Permutation modules for Ramsey structures}

\auths{David M. Evans}
$$
  \begin{array}{c}
    \textnormal{\small Dedicated to Alan Camina on his 85th Birthday}
  \end{array}
$$
\thispagestyle{empty}
\justify\noindent
\setstretch{0.3}
\abstract{ Suppose $R$ is a commutative ring and $G$ is a group acting on a set $W$. We consider the $RG$-module $RW$ in the case where $G$ is the automorphism group of an $\omega$-categorical structure $M$ and $W$ is, for example, $M^n$ (for $n \in \mathbb{N}$). We develop methods which may provide information about two questions in the case where $R$ is a field $F$: whether $FW$ has a.c.c. on submodules; and in the case where $M$ is finitely homogeneous, whether $FW$ is of finite composition length. In the case where $M$ is a Ramsey structure and so $G$ is extremely amenable, we give a simple `decision procedure' for membership in a submodule of $RW$ specified by a given generating set. If $F$ is a field, we show that there is a duality between submodules of $FW$ and the topological $FG$-module of definable functions from $W$ to $F$.}

\setstretch{2.1}
\noindent
{\fontsize{10pt}{-10pt}\selectfont {\it Mathematics Subject Classification (2020)}: Primary 20B27; Secondary 20B07, 03C15.}\\[-0.8cm]

\noindent 
\fontsize{10pt}{-10pt}\selectfont  {\it Keywords}: Oligomorphic permutation groups; $\omega$-categorical structures; structural Ramsey theory; extreme amenability.\\[-0.8cm]

\setstretch{1.1}
\fontsize{11pt}{12pt}\selectfont

\section{Introduction}
\subsection{Background and Questions}
Suppose $G$ is a group acting  on a set $W$ and $R$ is a commutative ring with 1. Let $RW$ denote the free $R$-module with basis $W$. By extending the permutation action on the basis linearly, we obtain an action of $G$ on $RW$. Formally, this makes $RW$ into a module for the group ring $RG$, but we shall say simply that $RW$ is a $G$-module. We refer to this as a \textit{permutation module}.  We will represent an element $x$ of $RG$ as a formal sum
$x = \sum_{w \in W} \alpha_w w$
where the $\alpha_w$ are elements of $R$ and only finitely many of these are non-zero. The finite set $\{w : \alpha_w \neq 0\}$ is called the \textit{support} of $x$.

We are interested in submodules of this module in the case where $M$ is a countably infinite $\omega$-categorical structure, $G = \Aut(M)$ is the automorphism group of $M$ and $W$ is a sort in $M^{eq}$ (for example, $W$ is $M^n$ for some $n \in \N$: see Section \ref{MT} for more details). We will mainly be interested in the case where $R$ is a field $F$. In this case, $FW$ is the $F$-vector space with basis $W$ and $G$ is acting on this as $F$-linear maps. Our aim is develop tools in the case where $M$ is a \textit{Ramsey structure}. As will be clear from the proofs, there are strong similarities between what we are doing here and the use of canonical functions in work on infinite domain CSPs (see \cite{B} for a comprehensive account) and the decidability results in \cite{BPT}.

Permutation modules first arose in model theory in the work of Ahlbrandt and Ziegler \cite{AZ, AZ2} and in work on finite covers (see \cite{EIM} for a survey). Abstracting some ideas from \cite{AZ} and \cite{HrTC}, the paper \cite{CE} gave a number of general results and questions. Independently, permutation modules arising from $\omega$-categorical structures have also been studied more recently in the computer science literature: for example, the papers \cite{BFKM, GHL22}. The following question appears in \cite{CE}  and more recently in \cite{BFKM}:

\begin{question}\label{Q1} \rm Suppose $M$ is a countable $\omega$-categorical structure and $G = \Aut(M)$. If $W$ is a sort in $M^{eq}$, does $FW$ have the ascending chain condition (acc) on submodules (equivalently, is every submodule finitely generated)?
\end{question}

It would be enough to prove acc in the cases where $W$ is $M^n$.  A model-theoretic method for proving acc in Question \ref{Q1} is to show that $W$ has an \textit{AZ-enumeration} (the idea is  due to Ahlbrandt and Ziegler and is sometimes called a nice enumeration, or a good enumeration). Further details and references can be found in \cite{CE} or \cite{EIM}. The paper \cite{Ivanov} show that this method has limitations by giving  an example of an $\omega$-categorical structure \textit{without} an AZ-enumeration (answering a question in \cite{AC,CE}). 

It is known  that the descending chain condition for $FW$ can fail for general $W$ (in fact, for the case where $F$ is a finite field, $M$ is the vector space of countable dimension over $F$ and $W$ is the corresponding projective space: see \cite{AZ2, BFKM}). However, for structures $M$ which are homogeneous in a finite relational language we have the following stronger question from \cite{BFKM}:

\begin{question} \label{Q2} \rm Suppose that $M$ is a structure which is homogeneous in a finite relational language and $G = \Aut(M)$. Let $k \in \N$ and consider $G$ acting on $W = M^k$. Is $FW$ of finite length for every field $F$? 

\end{question}

Here, we say that a module $U$ has \textit{finite length} if there is a bound on the length $n$ of a chain of submodules $\{0\} < U_1 < U_2 \ldots < U_n = U$. The maximum length is then called the length of $U$. 

\begin{remarks} \rm It is not hard to show that a negative answer to Question \ref{Q2} with $F$ finite would yield:
\begin{itemize}
\item[(i)] a structure which is homogeneous for a finite relational language whose automorphism group has infinitely many closed normal subgroups; 
\item[(ii)] a structure which is homogeneous in a finite relational language which has infinitely many first-order reducts.
\end{itemize}
So (i) would give a negative answer to a question of Macpherson (Question 2.2.7 (4) in \cite{HDM}) and (ii) would give a counterexample to a well-known conjecture of Simon Thomas \cite{ST} on reducts of finitely homogeneous structures. 
\end{remarks}
For the current `state-of-the art' for Question \ref{Q2}, see \cite{YBK}.

\subsection{Ramsey structures and a decision procedure} The aim of this paper is to initiate a consideration of Questions \ref{Q1} and \ref{Q2} in the case where $M$ is a (reduct of a) \textit{$\omega$-categorical Ramsey structure}. Although there are $\omega$-categorical structures which do not have this property (see \cite{EHN}), those that do have it form a broad and robust class of interesting examples (see \cite{HN}, for example) which is conjectured to include all finitely homogeneous structures. This approach is clearly motivated by work on reducts and infinite domain CSP's (see \cite{B}, for example).

If $M$ is a (ordered) \textit{Ramsey structure}, then by the theorem of Kechris, Pestov and Todor\v cevi\'c (\cite{KPT}), the group $G= \Aut(M)$, considered as a topological group with the usual topology on a permutation group, is \textit{extremely amenable}: meaning that every non-empty compact space on which it acts continuously has a $G$-fixed point (see \ref{Ramseyreview}). Under this hypothesis, we can develop a duality between the module $FW$ and the topological module $F[W]$ of \textit{definable functions} $W \to F$. These are functions which are constant on the parts of a partition of $W$ into finitely many definable sets.  
This approach has several consequences which seem to us to be surprising, including the following `decision procedure'.

Suppose that $R$ is a commutative ring, $x, v_1,\ldots, v_r \in RW$ and we wish to decide whether or not $x$ is in $Y = \langle v_1,\ldots, v_r \rangle_{RG}$, the $RG$-submodule of $RW$ generated by $v_1,\ldots, v_r$. Let $S \subseteq W$ be the support of $x$ and let $G_{(S)}$ be the (pointwise) stabiliser of $S$ in $G$.  Let $W_1,\ldots, W_\ell$ be the $G_{(S)}$-orbits on $W$ (there are only finitely many such orbits, by the $\omega$-categoricity). Define a map 
\[ \Omega_S: RW \to R^\ell\]
by 
\[\Omega_S\left(\sum_{w\in W} \gamma_w w\right) = \left(\sum_{w\in W_1} \gamma_w ,\ldots, \sum_{w\in W_\ell} \gamma_w \right).\]
If $x \in Y$, then clearly $\Omega_S(x) \in \Omega_S(Y)$. In general, there is no reason to expect that the converse should hold. However, we show in Theorem \ref{mainresult} that if $M$ is an $\omega$-categorical Ramsey structure, then the converse does indeed hold. Using this we can reduce our original decision problem to a computation in the finite-rank, free $R$-module $R^\ell$. Let $v_1,\ldots, v_t$ be representatives for the $G_{(S)}$-orbits on the union of the $G$-orbits on $RW$ containing $v_1,\ldots, v_r$ (the $\omega$-categoricity of $M$ and finiteness of $S$ guarantee that there are finitely many such orbits). Then we have the following (of course, $t$ and $\ell$ here will increase with the size of the support of $x$). 

\begin{theorem} \label{mainresult} Suppose $R$ is a commutative ring, $M$ is an $\omega$-categorical Ramsey structure and $W$ is a sort in $M^{eq}$. Let $x, v_1,\ldots, v_t$, $S$, $\Omega_S$ be as defined above. Then

\[ x \in \langle v_1,\ldots, v_r\rangle_{RG} \Leftrightarrow \Omega_S(x) \in \langle \Omega_S(v_1),\ldots, \Omega_S(v_t)\rangle_R. \Box\]
\end{theorem}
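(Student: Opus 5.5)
The forward implication is a direct computation. For the converse I plan to use extreme amenability of the pointwise stabiliser $H=G_{(S)}$ to replace a functional detecting $x\notin Y$ by an $H$-invariant one, which then factors through $\Omega_S$.

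\emph{Forward direction.} Write $Y=\langle v_1,\ldots,v_r\rangle_{RG}$ and $H=G_{(S)}$. For $h\in H$, $h$ permutes each $H$-orbit $W_i$, so $\Omega_S(hz)=\Omega_S(z)$ for all $z\in RW$. If $x=\sum_j r_j g_j v_{i_j}$, then each $g_jv_{i_j}$ lies in the $H$-orbit of some representative $v_k$ with $1\le k\le t$, say $g_jv_{i_j}=hv_k$. Hence $\Omega_S(g_jv_{i_j})=\Omega_S(v_k)$, and so $\Omega_S(x)\in\langle\Omega_S(v_1),\ldots,\Omega_S(v_t)\rangle_R$.

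\emph{Converse: set-up.} First I will use that $H$ is itself extremely amenable. $H$ is the automorphism group of the expansion $M_S$ of $M$ by constants naming the (finitely many) elements of $M$ involved in $S$. An $\omega$-categorical Ramsey structure expanded by finitely many constants is again an $\omega$-categorical Ramsey structure, since the ordering makes everything rigid. So by \cite{KPT}, every compact $H$-flow has a fixed point.

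\emph{Converse: argument over a field.} Suppose $x\notin Y$. The plan is to separate $x$ from $Y$ by a function $f\colon W\to R$ with $f(Y)=0$ and $f(x)\neq 0$, where $f$ acts on $RW$ by linear extension. Moreover $f$ should take only finitely many values; definable functions in the sense of the introduction suffice. The steps are as follows.
\begin{itemize}
\item Let $V$ be the finite set of values of $f$. The orbit closure $K=\overline{H\cdot f}$ in $V^W$, with the product topology, is a compact $H$-flow.
\item Choose an $H$-fixed point $f^*\in K$.
\item The condition ``$f'(gv_i)=0$ for all $g\in G$ and all $i$'' is closed in $V^W$, because each $gv_i$ has finite support. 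It holds on $H\cdot f$, since $Y$ is $G$-invariant, so $f^*$ still annihilates $Y$.
\item Since $x$ is fixed by $H$ and supported on the finite set $S$, the map $f'\mapsto f'(x)$ is continuous and constant on $H\cdot f$. Hence $f^*(x)=f(x)\ne 0$.
\item As $f^*$ is $H$-invariant, it is constant on each $W_i$, say with value $\lambda_i$. Then $f^*(z)=\lambda\cdot\Omega_S(z)$ for the vector $\lambda=(\lambda_1,\ldots,\lambda_\ell)$.
\item So $\lambda$ kills each $\Omega_S(v_k)$ but not $\Omega_S(x)$. Therefore $\Omega_S(x)\notin\langle\Omega_S(v_k)\rangle_R$, which proves the converse.
\end{itemize}

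\emph{Main obstacle.} The hard step is producing the separating function $f$ with finitely many values, especially over a general commutative ring $R$ rather than a field.
\begin{itemize}
\item \emph{Natural candidate.} The quotient map $W\to RW/Y$, $w\mapsto w+Y$, has $x$ not in its kernel, but it is not finitely valued. Its orbit closure need not be compact, so the fixed-point argument does not apply directly.
\item \emph{First attempt.} Pass to a finitely generated piece. Any witness that $x\in Y$, or its failure, involves only finitely many elements of $W$, namely $S$ together with the supports of finitely many translates $gv_i$. Use $\omega$-categoricity to coarsen the quotient map to a function that is constant on $G_{(T)}$-orbits for a suitable finite $T\supseteq S$.
\item \emph{Alternative.} Carry out the argument with values in the finite set of $G_{(T)}$-orbits, and apply the Ramsey property directly. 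The aim is to find $h\in H$ moving the finitely many relevant supports into positions where the $G_{(T)}$-type is canonical. This mirrors the canonical-functions technique of \cite{B}.
\end{itemize}
Once a finitely valued separating $f$ is available, the fixed-point argument above goes through verbatim over $R$.
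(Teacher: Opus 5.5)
\textbf{There is a genuine gap in the converse.} Your forward direction is fine. The skeleton of your converse also matches the paper's: find a fixed point of the extremely amenable group $G_{(S)}$ in a compact space of separating functions, then factor the invariant function through $\Omega_S$. But the step you flag as the ``main obstacle'' cannot be closed within your framework of $R$-valued functions, for two reasons.

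First, over a general commutative ring, $R$-valued functions need not separate $x$ from $Y$ at all. Take $R=\mathbb{Z}$, $Y=\langle 2w : w\in W\rangle_{RG}=2\mathbb{Z}W$ and $x=w_0$. Then $x\notin Y$, but any $f\colon W\to\mathbb{Z}$ with $f(Y)=0$ satisfies $2f(w)=0$, so $f=0$. Hence your closing claim that the argument ``goes through verbatim over $R$'' is false.

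Second, over an infinite field a separating linear functional exists, but the existence of a \emph{finitely valued} one is equivalent to the statement you want. The $G_{(S)}$-invariant function you end with takes at most $\ell$ values. Conversely, any $\lambda\in F^\ell$ separating $\Omega_S(x)$ from $\Omega_S(Y)$ yields such a function. So your ``first attempt'' and ``alternative'' restate the problem rather than reduce it, and neither is carried out. As written, your argument proves only the finite-field case, where $F^W$ is already compact; the paper sketches exactly that case before the general proof.

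\textbf{The missing idea is to make the target compact instead of making $f$ finitely valued.} Replace $R$ by its Pontryagin dual $\widehat{R}$, the compact group of characters of the discrete group $(R,+)$.
\begin{itemize}
\item Then $\widehat{R}^W$ is compact and is the Pontryagin dual of the discrete group $RW$, via the pairing $(f,\sum_w\alpha_w w)=\sum_w f(w)(\alpha_w)\in\mathbb{R}/\mathbb{Z}$.
\item Duality gives $f\in\widehat{R}^W$ with $(f,Y)=\overline{0}$ and $\delta=(f,x)\neq\overline{0}$. In the example above, take $f(w)(n)=n/2+\mathbb{Z}$.
\item The set $\{k\in\widehat{R}^W : (k,Y)=\overline{0},\ (k,x)=\delta\}$ is non-empty, closed (hence compact) and $G_{(S)}$-invariant. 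So it contains a $G_{(S)}$-fixed $k$.
\item Such a $k$ takes a constant value $\phi_i\in\widehat{R}$ on each orbit $W_i$, and $(k,z)=\sum_i\phi_i(\Omega_S(z)_i)$. Hence $\Omega_S(x)\in\Omega_S(Y)$ would force $(k,x)=\overline{0}$, a contradiction.
\end{itemize}
Your orbit-closure version of the fixed-point step works equally well once this compactness is available.

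A smaller point: for $S\subseteq M^{eq}$, $G_{(S)}$ need not be the pointwise stabiliser of finitely many elements of $M$. Justify its extreme amenability by its being open in $G$ (Theorem~\ref{openisea}) rather than by naming constants.
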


This gives the following  `separation property':

\begin{corollary}  \label{sep} Let $F$ be a field, $M$ an $\omega$-categorical Ramsey structure and $W$ a sort in $M^{eq}$. Suppose $V \leq FW$ is an $FG$-submodule and $x \in FW \setminus V$. Let $S$ be the support of $x$. Then there is a $G_{(S)}$-invariant function $f : W \to F$ with $f(V) = 0$ and $f(x) \neq 0$.
\end{corollary}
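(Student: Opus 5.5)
We derive Corollary \ref{sep} from Theorem \ref{mainresult} with $R = F$, plus a finite-dimensional duality argument in $F^\ell$.

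\emph{Step 1: reduce to a finitely generated submodule.} Let $W_1,\ldots,W_\ell$ be the $G_{(S)}$-orbits on $W$, finitely many by $\omega$-categoricity. Consider the subspace $\Omega_S(V) \leq F^\ell$. It is finite-dimensional, so there are finitely many $u_1,\ldots,u_m \in V$ with $\Omega_S(V) = \langle \Omega_S(u_1),\ldots,\Omega_S(u_m)\rangle_F$. Put $Y = \langle u_1,\ldots,u_m\rangle_{FG} \leq V$. Then $x \notin Y$, since $x \notin V$.

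\emph{Step 2: apply Theorem \ref{mainresult} to $Y$.} Let $v_1,\ldots,v_t$ be representatives of the $G_{(S)}$-orbits on the union of the $G$-orbits of $u_1,\ldots,u_m$. Every $g u_i$ lies in $V$, so each $\Omega_S(v_j) \in \Omega_S(V)$. Conversely, the $u_i$ themselves lie in these orbits. Since $\Omega_S$ is $G_{(S)}$-invariant ($\Omega_S(hy) = \Omega_S(y)$ for $h \in G_{(S)}$, because $h$ permutes the elements within each $W_k$), each $\Omega_S(u_i)$ equals some $\Omega_S(v_j)$. Hence $\langle \Omega_S(v_1),\ldots,\Omega_S(v_t)\rangle_F = \Omega_S(V)$. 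As $x \notin Y$, Theorem \ref{mainresult} gives $\Omega_S(x) \notin \Omega_S(V)$. (Alternatively one can skip Step 1 and argue directly: if $\Omega_S(x) \in \Omega_S(V)$, choose finitely many elements of $V$ witnessing this and apply the theorem to the submodule they generate. The bookkeeping is the same.)

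\emph{Step 3: linear algebra.} Because $\Omega_S(x) \notin \Omega_S(V)$ in the finite-dimensional space $F^\ell$, there is a linear functional $\lambda = (\lambda_1,\ldots,\lambda_\ell)$ on $F^\ell$ that vanishes on $\Omega_S(V)$ and satisfies $\lambda(\Omega_S(x)) \neq 0$. Define $f : W \to F$ by $f(w) = \lambda_k$ for $w \in W_k$. Then $f$ is constant on $G_{(S)}$-orbits, so it is $G_{(S)}$-invariant. Extending $f$ linearly to $FW$ gives $f = \lambda \circ \Omega_S$. Therefore $f(V) = 0$ and $f(x) \neq 0$.

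The only real content is Theorem \ref{mainresult}; the one point needing care is checking that the generators $v_j$ of the theorem produce exactly $\Omega_S(V)$, which is handled in Step 2.
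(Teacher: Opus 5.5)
Your proof is correct and is essentially the paper's argument: the paper gets Corollary \ref{sep} from the proof of Corollary \ref{cfgen}, which uses Theorem \ref{omegathm} to obtain $\Omega_S(x) \notin \Omega_S(V)$ and then pulls a separating functional on $F^\ell$ back to the $G_{(S)}$-invariant function $\sum_i \alpha_i \mathbf{1}_{O_i}$, exactly as in your Step 3. The only difference is that Theorem \ref{omegathm} holds for an arbitrary $G$-invariant $V$, so your reduction to a finitely generated submodule in Steps 1--2 is correct but unnecessary; it is needed only because you route through Theorem \ref{mainresult}.
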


\begin{remarks} \rm In the case $M = (\mathbb{Q}; \leq)$, these results have been proved using different methods (relying on the results of \cite{BFKM}) in \cite{AGThesis}. Decision processes for reducts of $\omega$-categorical Ramsey structures are proved in \cite{BPT}. The methods used here are clearly based on those in \cite{BPT}, but it is less clear whether our results formally follow from those in \cite{BPT}.
\end{remarks}

The main results Theorem \ref{mainresult} and Corollary  \ref{cfgen} are  proved in Section \ref{dd}. In Section~\ref{defdualsec} we show that, for Ramsey structures, we have a duality between submodules of $FW$ and closed submodules of the \textit{definable} dual $F[W] \leq F^W$ (Corollary \ref{defdual}). We then give some model-theoretic results about computing the topological closure in $F[W]$. In Section \ref{acc}, we  show (Corollary \ref{acccor})  that for a reduct  of an $\omega$-categorical Ramsey structure, a failure of acc in Question \ref{Q1} can be witnessed by a proper ascending chain of cyclic modules. Whilst this is perhaps surprising, it seems to be a long way from a proof of acc for such a structure.

The remainder of the current section is a summary of some of the notation and background we are using. It should perhaps more properly be regarded as an appendix which can be referred to if necessary.

\subsection{Background definitions and notation} \label{Appendix} In this section we collect together some facts, terminology and notation about permutation groups and model theory. The reference \cite{HDM} gives a much more comprehensive account of the background.

\subsubsection{Permutation groups} Suppose $G$ is a group acting (on the left) on a set $X$ and $A \subseteq X$. We denote by $G_{(A)}$ the pointwise stabiliser $\{g \in G : g(a) = a \mbox{ for all } a\in A\}$. If the action is faithful, then we call $G$ a permutation group (on $X$) and we may regard $G$ as a subgroup of $\Sym(X)$, the symmetric group of all permutations of $X$. In this case it is useful to think of $\Sym(X)$ as a topological group and give $G$ the relative topology where basic open sets are left cosets of subgroups $G_{(A)}$ for finite $A \subseteq X$.   Note that if $g \in G$, then $gG_{(A)} = \{ h \in G : h \vert A = g\vert A\}$, so the topology here is that of $G \subseteq X^X$ with the product topology where $X$ has the discrete topology (here and elsewhere, $g\vert A$ denotes the restriction of the function $g$ to $A$). The topology on $\Sym(X)$  is sometimes called the topology of pointwise convergence, and for countable $X$, it  is complete metrizable. We occasionally use this topology when the action is not faithful (in which case the topology is not Hausdorff and the closure of the identity subgroup is the kernel of the action).

With this topology, a subgroup of $\Sym(X)$ is closed iff it is the automorphism group of a first-order structure with domain $X$. In this case the structure can be taken to be relational with $G$-orbits on $X^n$ as the atomic $n$-ary relations: we refer to this as the canonical structure (or language) associated with $(G; X)$. A subgroup of $\Sym(X)$ is compact iff it is closed and all of its orbits on $X$ are finite (in which case, it is profinite). 

A subgroup $H$ of $G \leq \Sym(X)$ is open iff there is a finite set $A \subseteq X$ with $G_{(A)} \leq H$. Let $a$ be a tuple enumerating the elements of $A$ and let $D$ be the $G$-orbit containing $a$. For $g,h \in G$ write $E(ga,ha)$ iff $g^{-1}h \in H$.  This is a well defined, $G$-invariant equivalence relation on $D$ and the stabilizer of the $E$-class containing $a$ is $H$. 

Note that if $R$ is any commutative ring and $G \leq \Sym(X)$, then the action of $G$ on the permutation module $RX$, considered as a discrete space, is continuous.

\subsubsection{Model theory} \label{MT} The notation (and abuse thereof) is fairly standard. Suppose $\LL$ is a first-order language and $M$ an $\LL$-structure. We usually do not distinguish notationally between the structure and its domain (underlying set): thus $M$ will also denote the latter. We denote by $\Aut(M)$ the automorphism group of $M$. If $M$ is a countably infinite $\omega$-categorical structure (we usually just say `$\omega$-categorical' here) then by the Ryll-Nardzewski Theorem $G = \Aut(M)$ is \textit{oligomorphic} on $M$: it has finitely many orbits on $M^n$ for all $n \in \omega$ (and conversely). Equivalently, $G_{(A)}$ has finitely many orbits on $M$ for all finite $A \subseteq M$. In this case a subset of $M^n$ is $A$-definable iff it is $G_{(A)}$-invariant and $n$-tuples $b,c \in M^n$ are in the same $G_{(A)}$-orbit iff they have the same (complete) type over $A$. We denote the latter condition by the usual model-theoretic notation $\tp(b/A) = \tp(c/A)$, or $b \equiv_A c$. We also use this notation (with some care) when $a, b$ are elements of other sets on which $G$ is acting continuously, such as a permutation module $FW$ or the module of definable functions $F[W]$.

If $D$ is a $\emptyset$-definable subset of $M^n$ and $E$ is a $\emptyset$-definable equivalence relation on $D$, we refer to the equivalence classes as \textit{imaginary elements} of $M$ (of sort $D/E$). We denote the set of these (varying $D, E$, and including $M$) as $M^{eq}$. If $G = \Aut(M)$, then $G$ acts continuously on $M^{eq}$ and if $e \in M^{eq}$, then its stabilizer $G_e$ is open in $G$. If $M$ is (countable) $\omega$-categorical then the converse is true: any open subgroup of $G$ is of the form $G_e$ for some $e \in M^{eq}$. Up to interdefinability, $M^{eq}$ is closed under finite unions of sorts. So being a sort in $M^{eq}$ is the same thing as being a finite union of $G$-orbits on $M^{eq}$.

If $M$ is $\omega$-categorical, $a \in M$ and $B$ is a finite subset of $M$, the model-theoretic notions of $a$ being in the algebraic (resp. definable) closure of $B$ correspond to $a$ being in a finite $G_{(B)}$-orbit (resp. of size 1). This extends to $M^{eq}$ in an obvious way. 

\def\Age{{\rm Age}}
\def\Fraisse{Fra\"{\i}ss\'e }

Suppose that $\LL$ is a (relational)  language and $M$ is a countable $\LL$-structure. The class of finite $\LL$-structures which can be embedded into $M$ is referred to as the \textit{age} of $M$, denoted by $\Age(M)$. We say that $M$ is a \Fraisse structure if every isomorphism between finite substructures of $M$ extends to an automorphism of $M$ (other terminology used is that $M$ is \textit{homogeneous}, or \textit{ultrahomogeneous}). We can make the same definition for arbitrary structures $M$ (where function symbols are in the language), but for the purposes here, we should assume local finiteness of $M$: every finite subset is contained in a finite substructure. This holds automatically if $M$ is $\omega$-categorical. The classical result of \Fraisse says that $M$ is a \Fraisse structure iff $\Age(M)$ is an amalgamation class. Moreover, every amalgamation class of finite structures with countably many isomorphism types is the age of a (unique up to isomorphism) countable \Fraisse structure, referred to as the \Fraisse limit of the class. 

Note that if $G$ a closed permutation group on a countable set $X$, then the canonical structure associated with $(G;X)$ is a \Fraisse structure with automorphism group $G$. So any general result about automorphism groups of \Fraisse structures is  a general result about closed permutation groups on a countable set.

In Section \ref{defdualsec} we will use the notion of an \textit{elementary embedding} between two $\LL$-structures. We recall the definition here and refer the reader to a standard text in model theory for more details (for example, \cite{Mar}). Suppose $M, N$ are $\LL$-structures and $M$ is a substructure of $N$. We say that $M$ is an \textit{elementary substructure} of $N$ (and $N$ is an \textit{elementary extension} of $M$) if whenever $\phi(x_1,\ldots, x_n)$ is an $\LL$-formula and $a_1,\ldots, a_n \in M$, then $M \models \phi(a_1,\ldots, a_n)$ iff $N \models \phi(a_1,\ldots, a_n)$. In other words, the tuple $(a_1,\ldots, a_n)$ satisfies the formula $\phi$ in $M$ iff it satisfies $\phi$ in $N$. In this case we write $M \preceq N$. More generally an embedding $\alpha : M \to M'$ between $\LL$-structures is an \textit{elementary embedding} if its image is an elementary substructure of $M'$. 

Note that in the above we can take $\phi$ to be a closed formula and so $M \preceq N$ implies that $M, N$ are elementarily equivalent. In general the converse does not hold. However, if $M$ is a countable, $\omega$-categorical homogeneous structure, then its theory has quantifier elimination and embeddings between models are elementary.

\subsubsection{Ramsey classes and extreme amenability}  \label{Ramseyreview} A topological group $G$ is said to be \textit{extremely amenable} if, whenever $Y$ is a non-empty, compact Hausdorff space on which $G$ acts continuously, then there is a a $G$-fixed point in $Y$ (in this action). In the case where $G$ is a closed permutation group on a countably infinite set, we have the following result of Kechris, Pestov and Todor\v cevi\'c (\cite{KPT}):

\begin{theorem} Suppose that $M$ is a countable \Fraisse structure in a language $\LL$ and $G = \Aut(M)$. Then $G$ is extremely amenable if and only if $\Age(M)$ is a Ramsey class of rigid finite $\LL$-structures.
\end{theorem}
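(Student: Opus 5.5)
The plan is to pass through a combinatorial reformulation of extreme amenability that is valid for closed subgroups of $\Sym(M)$, and then translate it into the language of embeddings of finite substructures.

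\textbf{Step 1 (combinatorial characterisation).} I would show that $G$ is extremely amenable if and only if the following holds: for every open subgroup $V \leq G$, every $k \in \N$, every colouring $c : G/V \to k$ and every finite $F \subseteq G/V$, there is $g \in G$ such that $c$ is constant on $gF$.

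For ($\Rightarrow$), take the compact space $k^{G/V}$ with the shift action, which is continuous because $V$ is open. Let $X$ be the orbit closure of $c$. A fixed point $c' \in X$ is a $G$-invariant colouring, and it is constant since $G$ is transitive on $G/V$. Since $c'$ lies in the closure of $Gc$, some $g^{-1}c$ agrees with $c'$ on $F$, and then $c$ is constant on $gF$.

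For ($\Leftarrow$), let $G$ act continuously on a compact Hausdorff space $Y$. By compactness it suffices to show the following: for every finite open cover $\mathcal U$ of $Y$ and every finite $E \subseteq G$, there is a point $y$ such that all the $hy$ ($h \in E$) lie in a common member of $\mathcal U$ (suitably refined, for instance using an entourage of the uniformity). To find such a $y$, fix $y_0 \in Y$ and use continuity at $y_0$ together with compactness to find an open subgroup $V$ such that the map $g \mapsto g^{-1}y_0$ is uniformly small on left $V$-cosets. Then colour $G/V$ according to which member of a finite cover contains $g^{-1}y_0$, apply the hypothesis to $F = E^{-1}V/V$, and unwind. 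I expect this uniformity bookkeeping to be the main obstacle, and I would model it on the standard ``finite oscillation stability'' argument.

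\textbf{Step 2 (rigidity).} If $G$ is extremely amenable, let $G$ act on the compact space of linear orders on $M$, viewed as a closed subset of $2^{M\times M}$. A fixed point is a $G$-invariant linear order. Every automorphism of a finite substructure $A$ extends to an element of $G$, because $M$ is a \Fraisse structure, so it preserves this order on $A$ and is therefore trivial. Hence $\Age(M)$ consists of rigid structures.

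\textbf{Step 3 (translation).} Assume rigidity. By $\omega$-categoricity-free arguments, it is enough to let $V$ range over the subgroups $G_{(A)}$ with $A$ a finite substructure; these form a base at the identity, and the condition of Step 1 passes to larger open subgroups by composing colourings with the projection. By homogeneity, $G/G_{(A)}$ is the set of embeddings $A \to M$. By rigidity, this set is the set $\binom{M}{A}$ of copies of $A$ in $M$.

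A finite $F$ is contained in $\binom{B}{A}$ for some finite $B \subseteq M$, and the translates $gB$ are exactly the copies of $B$, again by homogeneity. So the condition of Step 1 says precisely that $M \to (B)^A_k$ for all finite $A \leq B$ in the age and all $k$. A standard compactness argument (K\"onig's lemma over the finite substructures of $M$) shows this is equivalent to the finite Ramsey property: there is $C \in \Age(M)$ with $C \to (B)^A_k$.

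Combining Steps 1--3 gives both directions of the theorem.
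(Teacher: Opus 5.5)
The paper does not prove this theorem; it quotes it from Kechris, Pestov and Todor\v{c}evi\'c \cite{KPT}. Your outline is correct and is essentially their original argument: the colouring characterisation of extreme amenability for closed subgroups of $\Sym(M)$, rigidity from a $G$-invariant linear order obtained as a fixed point on the compact space of linear orders, and the identification $G/G_{(A)}\cong\binom{M}{A}$ followed by a K\"onig's lemma compactness step.

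One point in Step 1 ($\Leftarrow$) needs tightening. Continuity at $y_0$ alone does not make $g\mapsto g^{-1}y_0$ uniformly small on the cosets $gV$. What you need is that for every entourage $U$ there is an open subgroup $V$ with $(y,vy)\in U$ for all $y\in Y$ and $v\in V$. This follows from joint continuity of the action at every $(1,y)$ together with compactness of $Y$, and then choosing $V$ inside the resulting neighbourhood of $1$.
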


Here, a finite $\LL$-structure $A$ is said to be rigid if it has trivial automorphism group. A class $\C$ of finite $\LL$-structures is said to be a \textit{Ramsey class} if it is closed under isomorphisms, taking substructure and satisfies the following property (for all $k \in \N$, but it is sufficient to verify this for $k=2$):

For all  $A \subseteq B \in \C$ there is $B \subseteq C \in \C$ such that whenever $f : \binom{C}{A} \to [k]$ is a $k$-colouring of the copies of $A$ inside $C$, then there is a copy $B'$ of $B$ contained in $C$ such that $f$ restricted to $\binom{B'}{A}$ is constant.

If $\C$ is a Ramsey class and has the joint embedding property, then $\C$ is an amalgamation class. So $\C$ is the age of a \Fraisse structure. We refer to a countable \Fraisse structure whose age is a rigid Ramsey class as a \textit{Ramsey structure}.

 Verifying that $M$ is a Ramsey structure usually involves proving that its age is a Ramsey class using combinatorial methods (cf. \cite{HN} for recent advances in the techniques for doing this). Here, we will be be working with Ramsey structures and using extreme amenability of the automorphism group as the key property, rather than the Ramsey property of the age. Examples of Ramsey classes can often be described by specifying an amalgamation class $\C_0$ of finite structures and then describing $\C$ as the class of structures in $\C_0$ expanded by certain orderings. For example, the following give rise to Ramsey classes:
 \begin{itemize}
 \item $\C_0$ is a free amalgamation class in a countable relational language $\LL_0$ and $\C$ is obtained by considering all orderings of structures in $\C_0$;
 \item $\C_0$ is the class of all finite partial orderings and $\C$ is obtained by taking elements of $\C_0$ expanded by a compatible linear ordering;
 \item $\C_0$ is the class of finite-dimensional vector spaces over a fixed finite field $E$. The orderings we add are \textit{locally reverse lexicographic} (see \cite{T2}, for example).
 \end{itemize}
Respectively, the Ramsey structures which are the \Fraisse limits of these classes are sometimes referred to as: the generically ordered \Fraisse limit of $\C_0$; the compatibly ordered generic poset; the generic locally reverse lexicographically ordered $E$-vector space. More examples and information can be found in the references  \cite{HDM, B, EHN, HN}.

We will make much use of the following result:

\begin{theorem}\label{openisea} (\cite{BPT}, Lemma 13) Suppose that $G$ is an extremely amenable topological group and $H$ is an open subgroup of $G$. Then $H$ is also extremely amenable. 
\end{theorem}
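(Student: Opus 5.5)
The plan is to \emph{coinduce} the $H$-action up to $G$, producing a compact $G$-space whose $G$-fixed points give $H$-fixed points in the original space. Let $Y$ be a non-empty compact Hausdorff space with a continuous action of $H$. Consider
\[ Z = \{ f : G \to Y \;:\; f(gh) = h^{-1} f(g) \mbox{ for all } g \in G,\ h \in H\}, \]
with $G$ acting by $(g\cdot f)(x) = f(g^{-1}x)$. This is an action preserving $Z$, because the defining condition involves right multiplication and the action involves left multiplication.

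\textbf{Step 1: $Z$ is non-empty, compact and Hausdorff.} Fix a set $K$ of left coset representatives for $H$ in $G$. An element $f\in Z$ is determined by its values on $K$, and any choice of values there extends uniquely via $f(kh)=h^{-1}f(k)$. So restriction gives a bijection $Z \to Y^K$. Give $Z$ the topology transported from the product topology on $Y^K$. By Tychonoff's theorem $Z$ is compact Hausdorff, and it is non-empty since $Y$ is.

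\textbf{Step 2: the $G$-action on $Z$ is continuous.} This is the step where openness of $H$ is used, and I expect it to be the only real obstacle. It suffices to check that for each fixed $k\in K$ the map $(g,f)\mapsto (g\cdot f)(k) = f(g^{-1}k)$ is continuous at each point $(g_0,f_0)$ of $G\times Z$.

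Write $g = g_0 u$. Then $g^{-1}k = u^{-1}g_0^{-1}k$. Let $k' \in K$ and $h_0\in H$ satisfy $g_0^{-1}k = k'h_0$. Then
\[ g^{-1}k = k'\,(k'^{-1}u^{-1}k')\,h_0 . \]
The set $U = k'Hk'^{-1}$ is an open neighbourhood of $1$, because $H$ is open. For $u\in U$ we have $h_u := k'^{-1}u^{-1}k' \in H$, and so
\[ f(g^{-1}k) = (h_u h_0)^{-1} f(k'). \]
The map $u \mapsto h_u$ is continuous into $H$, the coordinate map $f \mapsto f(k')$ is continuous, and the action of $H$ on $Y$ is continuous. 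Hence $f(g^{-1}k)$ depends continuously on $(g,f)$ on the neighbourhood $g_0U \times Z$ of $(g_0,f_0)$.

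\textbf{Step 3: a $G$-fixed point gives an $H$-fixed point.} By extreme amenability of $G$ there is a $G$-fixed $f\in Z$. Then $f(g^{-1}) = f(1)$ for all $g \in G$, so $f$ is constant, say with value $y$. The equivariance condition with $g=1$ then gives $y = f(h) = h^{-1}y$ for all $h\in H$. So $y$ is an $H$-fixed point of $Y$, and $H$ is extremely amenable.
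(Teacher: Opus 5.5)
The paper gives no proof of this statement. It is quoted from Lemma 13 of \cite{BPT} and used as a black box, namely to know that the open subgroups $G_{(S)}$, for finite $S \subseteq W$, are extremely amenable. So there is no internal argument to compare yours with.

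Your proof is correct and complete; it is the standard co-induction (induced flow) construction. The individual checks all go through:
\begin{itemize}
\item $Z$ is preserved by the left-translation action.
\item Restriction to a transversal $K$ is a bijection onto $Y^K$. The transported topology is also the subspace topology from $Y^G$, because $f(kh)=h^{-1}f(k)$ depends continuously on $f(k)$. So nothing depends on the choice of $K$.
\item Openness of $H$ enters exactly where you say. The set $g_0\,k'Hk'^{-1} \times Z$ is an open neighbourhood of $(g_0,f_0)$, and on it the coordinate $f(g^{-1}k) = (h_uh_0)^{-1}f(k')$ is a composite of continuous maps.
\item A $G$-fixed $f$ is constant, and its value is $H$-fixed.
\end{itemize}

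What your route buys is a self-contained proof for arbitrary topological groups and arbitrary open subgroups, which is the generality the paper relies on. Openness cannot be weakened to closedness. For example, the subgroup of $\Aut(\Q,<)$ generated by $x \mapsto x+1$ is closed, discrete and isomorphic to $\Z$, and $\Z$ is not extremely amenable.
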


\textbf{Acknowledgements} Thanks are due to Miko{\l}aj Boja\'nczyk, Arka Ghosh, Piotr Hofman, Bartek Klin, S{\l}awomir Lasota, Szymon Toru\' nczyk and  Jingjie Yang for interesting discussions about their work and some of the material in this paper.

\section{Dualities}\label{dualitysection}

\subsection{Vector space duality}

\begin{definition} \rm Suppose $R$ is a commutative ring and $G$ is a group acting on a set $W$. Let $R^W$ denote the $R$-module of functions $f : W \to R$, regarded as a topological space with the pointwise convergence topology (where $R$ has the discrete topology; so for any finite $S \subseteq W$ and $X\subseteq R^S$, the set $X \times R^{W\setminus S} \subseteq R^W$ is clopen). This is a $G$-module under the action $(gf)(w) = f(g^{-1}(w))$. We refer to $R^W$ as the \textit{dual permutation module} (associated with $(G; W)$). 

Note that if $f \in R^W$ and $v = \sum_w \alpha_w w \in RW$, then we may define $f(v) = \sum_w \alpha_w f(w)$ as only finitely many $\alpha_w \in R$ are non-zero. If $g \in G$, then we have $(gf)(v) = f(g^{-1}v)$.
\end{definition}

We now assume that $R$ is a field $F$. In this case, the terminology is explained by the following well-known fact. The notation is as above.
\begin{lemma} \label{duality} (i) If $U$ is a subspace of $FW$, then $U^\perp = \{ f \in F^W :  f(u) = 0 \mbox{ for all } u \in U\}$ is a closed subspace of $F^W$. If $U$ is $G$-invariant, then so is $U^\perp$.

(ii) If $V$ is a subspace of $F^W$, then $V^\perp = \{ x \in FW : f(x) = 0 \mbox{ for all } f \in V\}$ is a subspace of $FW$. Moreover, $(V^\perp)^\perp$ is the closure of $V$ in $F^W$. If $V$ is $G$-invariant, then so is $V^\perp$.

(iii) There is an inclusion-reversing correspondence between the poset of $G$-submodules of $FW$ and the poset of closed $G$-submodules of $F^W$.
\end{lemma}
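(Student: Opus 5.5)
We prove the three parts of Lemma \ref{duality} in order, using only the pairing $F^W \times FW \to F$, $(f,x)\mapsto f(x)$, which is bilinear and satisfies $(gf)(gx)=f(x)$.

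For (i), note that for each fixed $x = \sum_w \alpha_w w \in FW$ with finite support $S$, the map $f \mapsto f(x)$ depends only on $f\vert S$. Hence $\{f : f(x) = 0\}$ is a clopen subspace of $F^W$ (it has the form $X\times F^{W\setminus S}$). Then $U^\perp$ is an intersection of such sets, so it is a closed subspace. If $U$ is $G$-invariant and $f\in U^\perp$, then $(gf)(u)=f(g^{-1}u)=0$ for every $u\in U$, so $gf\in U^\perp$. The invariance statement in (ii) is proved the same way, and $V^\perp$ is clearly a subspace.

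The main content is the claim $(V^\perp)^\perp = \overline{V}$. By (i), $(V^\perp)^\perp$ is closed and contains $V$, so it contains $\overline V$. For the converse, take $f\notin\overline V$. Then there is a finite $S\subseteq W$ such that no $h\in V$ satisfies $h\vert S = f\vert S$; that is, $f\vert S \notin V\vert S := \{h\vert S : h\in V\}$, which is a subspace of the finite-dimensional space $F^S$. By finite-dimensional linear algebra, there is a linear functional on $F^S$ that vanishes on $V\vert S$ but not at $f\vert S$. Write it as $\phi \mapsto \sum_{s\in S}\alpha_s\phi(s)$ and put $x=\sum_{s\in S}\alpha_s s\in FW$. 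Then $h(x)=0$ for all $h\in V$, so $x\in V^\perp$, while $f(x)\neq 0$, so $f\notin (V^\perp)^\perp$. This separation step is the only non-formal point, and it reduces to duality for $F^S$ because the topology is determined by finite restrictions.

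For (iii), we also need $(U^\perp)^\perp = U$ for every subspace $U \le FW$. The inclusion $\supseteq$ is trivial. Conversely, suppose $x\notin U$. Choose a linear complement of $U$ in $FW$ containing $x$, and define a linear functional on $FW$ that kills $U$ and takes the value $1$ at $x$. Restricting it to the basis $W$ gives an $f\in F^W$ with $f\in U^\perp$ and $f(x)\neq0$, so $x \notin (U^\perp)^\perp$. Combining this with (ii), the maps $U\mapsto U^\perp$ and $V\mapsto V^\perp$ are mutually inverse and inclusion-reversing bijections between the subspaces of $FW$ and the closed subspaces of $F^W$. By the invariance statements in (i) and (ii), they restrict to a correspondence between $G$-submodules of $FW$ and closed $G$-submodules of $F^W$.
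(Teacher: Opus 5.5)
Your proof is correct and follows the paper's route: closedness in (i) comes from finite supports, and in (ii) the equality $(V^\perp)^\perp=\overline{V}$ is reduced to the duality between $FS$ and $F^S$ for a finite $S$ (you argue directly, whereas the paper argues by contradiction). You also explicitly prove $(U^\perp)^\perp=U$ for subspaces $U\le FW$, which the paper's remark that (iii) ``follows from (i) and (ii)'' uses without stating it, so your treatment of (iii) is more complete than the paper's.
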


\textit{Proof:\/} (iii) follows from (i) and (ii) and the statements about $G$-invariance are clear.

(i) As $U^\perp = \bigcap_{v \in U} \langle v \rangle^\perp$ it suffices to say why $\langle v \rangle^\perp$ is closed. The support $S$ of $v$ is finite and whether $f(v) = 0$ depends only on $f\vert S$. So the set of such $f$ is closed in $F^W$.

(ii) Clearly $V \subseteq (V^\perp)^\perp$ and by (i), the latter is closed. Let $V_1$ denote the closure of $V$ and suppose, for a contradiction, that $V_1 < (V^\perp)^\perp$. As $V_1$ is closed, there is a finite $S \subseteq W$ such that $V(S) \leq V_1(S) < (V^\perp)^\perp(S)$, where the $S$ here denotes restrictions to $S$. So this is a statement about the finite dimensional space $F^S$. As $V_1(S) = V(S)$ (by definition of closure), this contradicts the well-known duality of finite dimensional spaces (in particular, between $FS$ and $F^S$). $\Box$

Note that (ii) in the above uses that $F$ is a field (rather than an arbitrary commutative ring).

\subsection{Proof of the Main Results}\label{dd}

Throughout, $M$ is a countable $\omega$-categorical structure, $G = \Aut(M)$ and $W$ is a sort in $M^{eq}$. In particular, $G$ acts oligomorphically and continuously on $W$. One of the main aims of this subsection is to prove:

\begin{corollary} \label{cfgen} Suppose that $M$ is Ramsey, and $F$ is a field. Suppose that $U < V \leq F^W$ are closed, $G$-invariant subspaces of $F^W$. Then there is a finite $S \subseteq W$ and a $G_{(S)}$-invariant function $h \in V \setminus U$.
\end{corollary}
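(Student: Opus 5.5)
Since $U<V$ are closed, Lemma \ref{duality} gives $V^\perp<U^\perp$ in $FW$, so we can choose $x\in U^\perp\setminus V^\perp$. Let $S$ be the support of $x$. Then $U\subseteq\{f: f(x)=0\}$, while some $f_0\in V$ has $f_0(x)\neq 0$. We will look for a $G_{(S)}$-fixed point in a suitable compact space built from $V$. Such a fixed point $h$ will be $G_{(S)}$-invariant, will lie in $V$, and will satisfy $h(x)\neq 0$, so in particular $h\notin U$.

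\textbf{The compact space.} By Theorem \ref{openisea}, $H=G_{(S)}$ is extremely amenable, since it is open in $G$. The difficulty is that $V$ need not be compact, because $F$ may be infinite. I would instead consider the affine closed subset
\[ K=\{f\in V : f(x)=1\}, \]
which is $H$-invariant because $(hf)(x)=f(h^{-1}x)=f(x)$ for $h\in H$. After rescaling $f_0$, $K$ is nonempty. The main obstacle is compactness: $K$ is not compact if $F$ is infinite.

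\textbf{Reducing to finitely many values.} I would replace $K$ by a subset with finitely many values at each point. For each $w\in W$, the $H$-orbit of $w$ is contained in a single $G_{(S)}$-orbit, and there are finitely many such orbits. The idea is to use the finiteness of orbits to choose $f\in K$ whose value set on each orbit is controlled.

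A cleaner route is to look at the coordinates. Let $W_1,\dots,W_\ell$ be the $H$-orbits on $W$. I want some $f\in V$ that is constant on each $W_i$ and satisfies $f(x)\neq 0$.

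I would handle this through orbit closures. The closure $Z=\overline{Hf_0}\subseteq F^W$ is $H$-invariant and lies in $V$, since $V$ is closed. For each $w$, the set $\{g(w):g\in H\}\cap$ (finitely many $G$-orbit representatives) is finite. More precisely, $Z$ is contained in $\prod_w \{f_0(u): u\in Gw\}$, and $f_0$ takes only finitely many values on $Gw$ provided $f_0$ is definable; this does not hold in general.

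\textbf{Making $f_0$ take finitely many values.} To fix this, I first replace $f_0$ by a function with finitely many values. Because $W$ has finitely many $G$-orbits and $F$ is a field, it is enough to work with a finitely generated $\Z$-subring, or prime-field subspace, spanned by the values. So I would choose $F$-linear maps $\lambda:F\to F_0$, with $F_0$ finite or arbitrary, applied coordinatewise; these preserve membership in closed $G$-invariant subspaces defined over $F$. This step is the technical heart of the argument, and I expect it to be the hardest part.

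Once $f_0$ has finite image $A$, the space $Z=\overline{Hf_0}\subseteq A^W$ is compact Hausdorff. The action of $H$ on $Z$ is continuous, and $f(x)=f_0(x)$ for every $f\in Z$, since evaluation at $x$ depends only on the finite set $S$ fixed by $H$. Extreme amenability of $H$ then gives a fixed point $h\in Z\subseteq V$ with $h(x)=f_0(x)\neq0$, so $h\notin U$.
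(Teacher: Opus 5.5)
\textbf{There is a genuine gap: for infinite $F$ the compactness step is never carried out, and the mechanism you propose for it does not work.}

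Your set-up is fine. Closedness and Lemma \ref{duality} give $x\in U^\perp\setminus V^\perp$. \emph{If} you had some $f_0\in V$ with finite image $A$ and $f_0(x)\neq 0$, a $G_{(S)}$-fixed point of the compact set $\overline{G_{(S)}f_0}\subseteq A^W$ would finish the proof; for finite $F$ this is essentially the paper's argument.

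For infinite $F$ you never produce such an $f_0$, and this is not a side issue. Any non-empty $G_{(S)}$-invariant compact subset of $F^W$ has finite projection to each coordinate, and these projections are constant along $G_{(S)}$-orbits. Since $G_{(S)}$ has finitely many orbits on $W$, such a set consists of finite-valued functions. So your method needs a finite-valued element of $V$ not vanishing at $x$, which is essentially the conclusion itself.

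The coordinatewise maps $\lambda$ cannot supply it:
\begin{itemize}
\item An $F$-linear map $F\to F$ is multiplication by a scalar, so it does not shrink the value set.
\item There is no non-zero $F$-linear map into a field of different characteristic.
\item A merely additive map (linear over the prime field, or a specialisation of a finitely generated subring) does not preserve $F$-subspaces. For example, take $F=\Q(\sqrt2)$, $M=(\Q;<)$ and the closed $G$-invariant subspace $V=\{f\in F^{M^2}: f(a,b)=\sqrt2\,f(b,a)\text{ for all }a<b\}$. It contains the $f$ with $f(a,b)=\sqrt2$, $f(b,a)=1$ for $a<b$ and $f(a,a)=0$. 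Composing with the $\Q$-linear $\lambda$ given by $\lambda(1)=1$, $\lambda(\sqrt2)=0$ takes you out of $V$.
\item Passing to a finitely generated subring does not restore compactness either, since for instance $\Z$ is infinite.
\end{itemize}

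\textbf{The missing idea is not to look for the fixed point inside $V$ at all.} The paper moves to the Pontryagin dual. Since $(F,+)$ is discrete, $\hF^W$ is compact and dual to $FW$.
\begin{itemize}
\item Put $X=V^\perp$, so $x\notin X$. Extreme amenability of $G_{(S)}$ gives a $G_{(S)}$-invariant $k\in\hF^W$ with $(k,X)=\h0$ and $(k,x)\neq\h0$ (Proposition \ref{hcf}).
\item Because $k$ is constant on the $G_{(S)}$-orbits $O_1,\ldots,O_\ell$, this yields $\Omega_S(x)\notin\Omega_S(X)$ (Theorem \ref{omegathm}).
\item Only then does one return to $F$-valued functions, by finite-dimensional linear algebra in $F^\ell$. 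Choose $(\alpha_1,\ldots,\alpha_\ell)$ orthogonal to $\Omega_S(X)$ but not to $\Omega_S(x)$, and put $h=\sum_i\alpha_i\mathbf{1}_{O_i}$.
\item Then $h$ lies in $X^\perp=V$, is $G_{(S)}$-invariant, and has $h(x)\neq 0$, so $h\notin U$.
\end{itemize}
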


Note that Corollary \ref{sep} follows from this and the duality in the previous section. We first indicate the proof of \ref{cfgen} in the case where $F$ is a finite field. The proof of the general case is similar, but requires more machinery. So suppose that $F$ is finite and therefore  $F^W$ is compact. As $U$ is closed in $V$, there is some finite $S \subseteq W$ such that $f\vert S$ is not the restriction of any function in $U$ to $S$.  It suffices to show that there is some $G_{(S)}$-invariant  $h \in  V$ with $h \vert S = f\vert S$ (as this gives $h \not\in U$). Consider $Y = \{ h \in V : h \vert S = f\vert S\}$. This is a non-empty compact space on which the group $G_{(S)}$ is acting continuously. As $G_{(S)}$ is an open subgroup of the extremely amenable group $G$, it is also extremely amenable (Theorem \ref{openisea}). So there is a $G_{(S)}$-fixed element $h$ of $Y$. $\Box$

In order to prove the general version of Corollary \ref{cfgen}, we will make use of  Pontryagin duality between discrete and compact abelian groups (\cite{P}, Chapter 6). 

Suppose that $A = (A, +)$ is an abelian topological group. A \textit{character}  of $A$ is continuous homomorphism $\chi: (A,+) \to (\R/\Z , +)$, where $\R/\Z$ is the topological group $S^1$, the circle group written additively. Denote by $\h0$ the zero element of $\R/\Z$. Under addition and with the compact open topology for functions $A \to \R/\Z$, the characters of $A$ form a topological abelian group $\hat{A}$, called the \textit{Pontryagin dual} of $A$. If $f: A \to B$ is a continuous homomorphism of topological abelian groups, then we obtain a continuous homomorphism $\widehat{f} : \widehat{B} \to \widehat{A}$ (where $\widehat{f}(\chi) = \chi\circ f$). 

If $A$ is discrete, then $\widehat{A}$ is compact Hausdorff; conversely if $A$ is compact (and Hausdorff), then $\widehat{A}$ is discrete. Pontryagin duality tells us that in these cases, the double dual $\widehat{\widehat{A}}$ is naturally isomorphic to $A$ (if $a\in A$ then the evaluation map $\chi \mapsto \chi(a)$ is a character of $\widehat{A}$; the result is that this map gives an isomorphism $A \to \widehat{\widehat{A}}$). In fact, dualising in this way gives us a contravariant isomorphism between the categories of discrete abelian groups and of compact Hausdorff abelian groups (and continuous homomorphisms).

Now suppose $R$ is  a commutative ring. We consider $(R, +)$ as a discrete topological group, so the Pontryagin dual $(\hR, +)$ of  $(R,+)$ is a compact topological group. 
As $R$ is a commutative ring, we may consider $\hR$ as a left $R$-module by setting $(r\chi)(q) = \chi(rq)$, for $r,q \in R$ and $\chi \in \hR$.

Let $W$ and $G$ be as previously and let  $RW$ denote the free $R$-module with basis $W$. We consider this with the discrete topology; its dual (as an abelian group) is $\hR^W$ with the product topology. This is compact and the $G$-action on $\hR^W$ is continuous.

We have the pairing $(.,.) : \hR^W \times RW \to \R/\Z$ given by
\[ (f, \sum_{w \in W} \alpha_w w) = \sum_{w\in W } f(w)(\alpha_w).\]

Note that this is $G$-invariant: if $g \in G$, then 
\[ (gf, g\left(\sum_{w \in W} \alpha_w w\right)) = (f, \sum_{w \in W} \alpha_w w).\]

The duality tells us that if $X$ is a subgroup of $RW$ then its annihilator $\Ann(X) = \{ f \in \hR^W : (f,x) = \h0 \mbox{ for all } x \in X\}$ is a closed subgroup of $\hR^W$. Moreover, the map $X \mapsto \Ann(X)$ gives an inclusion-reversing correspondence between subgroups of $RW$ and closed subgroups of $\hR^W$ which restricts to a correspondence between $RG$-submodules of $RW$ and closed $RG$-submodules of $\hR^G$.

We will also use the following terminology:

\begin{definition}\label{support} \rm Suppose $u = \sum_{w} \alpha_w w  \in RW$. The \textit{support} of $u$ is the finite set $\supp(u)  = \{ w \in W : \alpha_w \neq 0\}$. If $C$ is a finite subset of $M^{eq}$, we say that the support of $u$ is \textit{contained in $C$} to mean that $u$ is fixed by $G_{(C)}$.
\end{definition}

\begin{proposition}  \label{hcf} Suppose that $M$ is a  Ramsey structure and therefore $G$ is extremely amenable. Let $X$ be a $G$-invariant subgroup of $RW$ and let $u \in RW$ have support contained in $S \subseteq_{fin} W$. Then $u \in X$ if and only if whenever $k \in \hR^W$ is $G_{(S)}$-invariant and $(k,x) = \h0$ for all $x \in X$, then we have $(k,u) = \h0$.
\end{proposition}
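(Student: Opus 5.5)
The forward direction is immediate: if $u \in X$ and $k$ annihilates $X$, then $(k,u) = \h0$. For the converse I argue by contraposition. Suppose $u \notin X$; I will produce a $G_{(S)}$-invariant $k \in \Ann(X)$ with $(k,u) \neq \h0$. The plan mirrors the finite-field argument for Corollary \ref{cfgen}, with Pontryagin duality supplying the compactness that $\hR^W$ always has.

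\textbf{Step 1: an annihilator that detects $u$.} Since $u \notin X$, the correspondence $X \mapsto \Ann(X)$ between subgroups of $RW$ and closed subgroups of $\hR^W$ yields some $f \in \Ann(X)$ with $(f,u) \neq \h0$. Concretely, $u$ is nonzero in the discrete group $RW/X$, so some character of $RW/X$ is nonzero at $u+X$. Composing with the quotient map gives a character of $RW$ vanishing on $X$, and the characters of $RW$ are exactly the elements of $\hR^W$ via the pairing.

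\textbf{Step 2: a compact set to average over.} Choose a finite $T \subseteq W$ with $\supp(u) \subseteq T$ and $G_{(S)} \leq G_{(T)}$; taking $T = \supp(u)$ will do. Indeed, $u$ is fixed by $G_{(S)}$, and $G_{(S)}$ then permutes $\supp(u)$ while preserving the coefficients. This is a point to check: the pairing $(h,u)$ depends only on the values $h(w)$ for $w \in \supp(u)$, not on $h\vert S$ directly. So I consider
\[ Y = \{ h \in \Ann(X) : (h,u) = (f,u) \}. \]
This set is closed in the compact space $\hR^W$, since $\Ann(X)$ is closed and $h \mapsto (h,u)$ is continuous, depending only on finitely many coordinates. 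It is non-empty, since $f \in Y$. Hence $Y$ is compact Hausdorff.

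\textbf{Step 3: $G_{(S)}$ acts on $Y$.} Let $g \in G_{(S)}$ and $h \in Y$. Then $gh \in \Ann(X)$, because $X$ is $G$-invariant and the pairing is $G$-invariant. Moreover, $(gh,u) = (h, g^{-1}u) = (h,u)$, using that $u$ is fixed by $G_{(S)}$. The action is continuous, being the restriction of the continuous action of $G$ on $\hR^W$.

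\textbf{Step 4: extreme amenability.} $G_{(S)}$ is open in $G$, since $S$ is a finite subset of the sort $W$ in $M^{eq}$. By Theorem \ref{openisea} it is therefore extremely amenable, so $Y$ contains a $G_{(S)}$-fixed point $k$. This $k$ is $G_{(S)}$-invariant, lies in $\Ann(X)$, and satisfies $(k,u) = (f,u) \neq \h0$. That contradicts the hypothesis, so $u \in X$.

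\textbf{Main obstacle.} The only delicate step is Step 1, namely getting a character separating $u$ from $X$. This amounts to the standard fact that characters of the discrete group $RW/X$ separate points, since $\R/\Z$ is divisible and one can extend from a cyclic subgroup. I will take it from Pontryagin duality as quoted.
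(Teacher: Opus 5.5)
Your proof is correct and is essentially the paper's argument: you obtain $f \in \Ann(X)$ with $(f,u) \neq \h0$ by duality, then apply extreme amenability of the open subgroup $G_{(S)}$ to the non-empty, compact, $G_{(S)}$-invariant set of $h \in \Ann(X)$ with $(h,u) = (f,u)$. The detour through $T$ in Step 2 is never used, since Step 3 only needs that $G_{(S)}$ fixes $u$; as justified, it also only shows that $G_{(S)}$ stabilises $\supp(u)$ setwise rather than pointwise, so you can simply delete it.
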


\textit{Proof:\/} The direction $\Rightarrow$ is trivial. So suppose $u \not\in X$. By the duality (using that $\Ann(\langle X, u\rangle) < \Ann(X)$) there exists $f \in \hR^W$ such that $(f,X) = \h0$ and $\delta = (f,u) \neq \h0.$ Consider 
\[ H = \{ h \in \hR^W : (h,X) = \h0 \mbox{ and } (h, u) = \delta\}.\]
This is closed in $\hR^W$ and therefore compact;  it is non-empty and also $G_{(S)}$-invariant. As $G_{(S)}$ is extremely amenable there is therefore a $G_{(S)}$-invariant $k \in H$, as required. $\Box$

\begin{definition}\rm Suppose $S \subseteq M^{eq}$ is finite. List the  $G_{(S)}$-orbits on $W$ as $O_1,\ldots, O_\ell$. We define a map $\Omega_S : RW \to R^\ell$ in the following way. For $z = \sum_w \alpha_w w \in RW$, let $\Omega_S(z) = (\gamma_1,\ldots, \gamma_\ell)$ where $\gamma_i = \sum_{w \in O_i} \alpha_w$.
\end{definition}

\begin{remarks}\rm We can view this as a generalised augmentation map. Indeed, as an $RG_{(S)}$-module we have
$RW = \bigoplus_{i = 1}^\ell RO_i$
and $\Omega_S$ is the direct sum of the augmentation maps $\eta_i: RO_i \to R$ (where $\eta_i(\sum_{w \in O_i} \alpha_w w) = \sum_{w \in O_i} \alpha_w$).
\end{remarks}

\begin{theorem}\label{omegathm} Suppose that $M$ is an $\omega$-categorical  Ramsey structure. Let $X$ be a $G$-invariant subgroup of $RW$ and suppose that $u \in RW$ has support contained in $S \subseteq_{fin}  M^{eq}$.  Then 
\[ u \in X \Leftrightarrow \Omega_S(u) \in \Omega_S(X).\]
\end{theorem}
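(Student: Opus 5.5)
The forward direction is immediate: if $u \in X$ then $\Omega_S(u) \in \Omega_S(X)$. For the converse I will use Proposition \ref{hcf} together with the observation that $G_{(S)}$-invariant elements of $\hR^W$ are exactly the functions constant on each $G_{(S)}$-orbit $O_1,\ldots,O_\ell$. Such a $k$ is determined by a tuple $(\chi_1,\ldots,\chi_\ell) \in \hR^\ell$, where $\chi_i$ is the common value of $k$ on $O_i$.

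For such $k$ and any $z = \sum_w \alpha_w w \in RW$ we have
\[ (k,z) = \sum_{i=1}^\ell \sum_{w \in O_i} \chi_i(\alpha_w) = \sum_{i=1}^\ell \chi_i(\gamma_i), \]
where $\Omega_S(z) = (\gamma_1,\ldots,\gamma_\ell)$. This uses that each $\chi_i$ is additive. So the pairing of a $G_{(S)}$-invariant $k$ with $z$ factors through $\Omega_S$: writing $\langle \chi, \gamma\rangle = \sum_i \chi_i(\gamma_i)$ for the standard pairing $\hR^\ell \times R^\ell \to \R/\Z$, we get $(k,z) = \langle \chi, \Omega_S(z)\rangle$.

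Now suppose $\Omega_S(u) \in \Omega_S(X)$, say $\Omega_S(u) = \Omega_S(x_0)$ with $x_0 \in X$. Let $k$ be $G_{(S)}$-invariant with $(k,x) = \h0$ for all $x \in X$. Then
\[ (k,u) = \langle \chi, \Omega_S(u)\rangle = \langle \chi, \Omega_S(x_0)\rangle = (k,x_0) = \h0. \]
By Proposition \ref{hcf}, $u \in X$.

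One hypothesis needs checking: Proposition \ref{hcf} is stated for $S \subseteq W$, whereas here $S$ is a finite subset of $M^{eq}$ and "support contained in $S$" means that $u$ is fixed by $G_{(S)}$. Its proof only uses that $H$ is $G_{(S)}$-invariant, which holds because $u$ is $G_{(S)}$-fixed and $X$ is $G$-invariant, and that $G_{(S)}$ is open in $G$, hence extremely amenable by Theorem \ref{openisea}. Both facts hold for finite $S \subseteq M^{eq}$, so the argument applies verbatim. The $\omega$-categoricity ensures $\ell$ is finite, so that $\Omega_S$ is defined. With this adaptation in hand the main step is just the factorisation of the pairing above, and I do not expect a serious obstacle.
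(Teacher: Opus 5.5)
Your proposal is correct and follows the paper's proof essentially verbatim. Write a $G_{(S)}$-invariant $k \in \hR^W$ as constant on the $G_{(S)}$-orbits, note that the pairing $(k,z)$ factors through $\Omega_S(z)$, and conclude with Proposition \ref{hcf}. Your check that Proposition \ref{hcf} still applies for finite $S \subseteq M^{eq}$ usefully spells out a point the paper leaves implicit: $u$ is $G_{(S)}$-fixed, $X$ is $G$-invariant, and $G_{(S)}$ is open and hence extremely amenable.
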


\textit{Proof:\/} Again, $\Rightarrow$ is trivial. Suppose $h \in \hR^W$ is $G_{(S)}$-invariant. List the $G_{(S)}$-orbits on $W$ as $O_1,\ldots, O_\ell$. We can write 
$h = \sum_{i = 1}^\ell \phi_i \mathbf{1}_{O_i}$, where $\phi_i \in \hR$ and $\mathbf{1}_{O_i}$ is the characteristic function of $O_i$. Let $[h]_S = (\phi_1, \ldots, \phi_\ell) \in \hR^\ell$. 

For $(x_1,\ldots, x_\ell) \in R^\ell$ define 
\[(\phi_1,\ldots, \phi_\ell)*(x_1, \ldots, x_\ell) = \phi_1(x_1) + \ldots + \phi_l(x_\ell) \in \R/\Z.\]
Then for any $z \in RW$ we have
\[ (h, z) = [h]_S * \Omega_S(z).\]
Thus, if $\Omega_S(u) \in \Omega_S(X)$ and $(h,X) = \h0$, then $(h,u) = \h0$. It then follows by Proposition \ref{hcf} that $u \in X$. $\Box$

We can now give:

\textit{Proof of Corollary \ref{cfgen}:\/} By Lemma \ref{duality} there are $G$-invariant subspaces $X < Y \leq FW$ with $V = X^\perp$ and $U = Y^\perp$. Let $y \in Y \setminus X$ and let $S$ contain the support of $y$. As $y \not\in X$, Theorem \ref{omegathm} implies that $\Omega_S(y) \not\in \Omega_S(X)$.

Denote by $\cdot$ the usual dot-product on $F^\ell$. This is a non-degenerate symmetric bilinear form on $F^\ell$ and therefore (by standard finite-dimensional linear algebra) there is $v \in F^\ell$ such that $v\cdot \Omega_S(x) = 0$ for all $x \in X$ and $v\cdot \Omega_S(y) \neq 0$. Let $v = (\alpha_1,\ldots, \alpha_\ell)$, denote the $G_{(S)}$-orbits on $W$ by $O_1,\ldots, O_\ell$ and let $h = \sum_{i=1}^\ell \alpha_i\mathbf{1}_{O_i} \in F^W$. This is $G_{(S)}$-invariant and  for any $z \in FW$ a simple calculation shows that $h(z) = v\cdot \Omega_S(z)$. It follows that $h \in X^\perp = V$ and $h(y) \neq 0$, so $h \not\in Y^\perp = U$, as required. $\Box$

We now return to the decision problem mentioned in the Introduction and give a proof of Theorem \ref{mainresult}.

Suppose that $M$ is an $\omega$-categorical Ramsey structure and $R$ is a commutative ring. Then Theorem \ref{omegathm} reduces questions about membership in an $RG$-submodule of $RW$ to a question about membership in some $R$-submodule of $R^\ell$. Indeed, suppose $x,v_1,\ldots, v_r \in RW$ are given and $S$ is a finite subset of $M^{eq}$ which contains the support of $x$. Let  $Y = \langle v_1,\ldots, v_r \rangle_{RG}$ be the $RG$-submodule of $RW$ generated by $v_1,\ldots, v_r$. Let $v_1,\ldots, v_t$ be representatives for the $G_{(S)}$-orbits on the union of the $G$-orbits containing $v_1,\ldots, v_r$. Thus $Y = \langle v_1,\ldots, v_t \rangle_{RG_{(S)}}$, the $RG_{(S)}$-submodule of $RW$ generated by $v_1, \dots, v_t$. 

Now, the map $\Omega_S : RW \to R^\ell$ is an $RG_{(S)}$-homomorphism (where $G_{(S)}$ acts trivially on $R^\ell$), therefore $\Omega_S(Y) = \langle \Omega_S(v_1),\ldots, \Omega_S(v_t)\rangle_R$. It then follows from Theorem \ref{omegathm} that $x \in Y$ iff $\Omega_S(x) \in \langle \Omega_S(v_1),\ldots, \Omega_S(v_t)\rangle_R$. So Theorem \ref{mainresult} is proved.

\begin{remarks}\rm Using Theorem \ref{mainresult}, we can reduce other computational problems about  $RW$ to `linear algebra' questions in $R^\ell$ and computations about $M$ essentially involving its universal theory (to compute orbits over finite sets). For example:
\begin{itemize}
\item Given $v_1,\ldots, v_r \in RW$, is $\langle v_1,\ldots, v_r\rangle_{RG} = RW$? \newline
To do this, take $w_1, \ldots, w_s$ representatives for the $G$-orbits on $W$ and test whether the vectors $w_1,\ldots, w_s \in RW$ are all in $\langle v_1,\ldots, v_r\rangle_{RG}$.
\item Given $v_1,\ldots, v_r \in RW$ and $k \in \N$, does  $\langle v_1,\ldots, v_r\rangle_{RG}$ contain a non-zero vector of support size at most $k$? \newline
By the $\omega$-categoricity, there is  a finite subset $S$ of $W$ such that every $G$-orbit on $k$-subsets of $W$ has a representative contained in $S$. By Theorem \ref{mainresult} the question is then equivalent to asking whether $\langle \Omega_S(v_1), \ldots, \Omega_S(v_r)\rangle_R \leq R^\ell$ has a non-zero vector $u$ where: there are at most $k$ non-zero coordinates in $u$; and these coordinates are ones corresponding to elements of $S$.
\end{itemize}

\end{remarks}

\begin{remarks}\label{reduct} \rm Suppose $M$ is an $\omega$-categorical Ramsey structure and $N$ is a reduct of $M$. Let $G = \Aut(M)$ and $H = \Aut(N)$. So $G$ is a closed, extremely amenable subgroup of $H$. Suppose  $W$ is a sort in $N^{eq}$ ($\subseteq M^{eq}$) and consider the $FH$-permutation module $FW$. Let $u, v_1,\ldots, v_s \in FW$ and consider the problem of determining whether or not $u$ is in $\langle v_1,\ldots, v_s\rangle_{FH}$. Consider the $G$-orbits on the union of the $H$-orbits (on $FW$) containing the vectors $v_1,\ldots, v_s$. There are finitely many of these (as $G$ acts oligomorphically on $W$); let $v_1,\ldots, v_t$ be representations from these $G$-orbits. It is easy to show that 
\[\langle v_1,\ldots, v_s\rangle_{FH} = \langle v_1,\ldots, v_t\rangle_{FG}.\]
As we have a method for deciding membership in the second of these, we have a method for the given $FH$-module.
\end{remarks}

\begin{remarks} \rm Similar results about decidability of membership are proved by different methods in \cite{GHL22} and \cite{AGThesis}. Theorem 6.1 of \cite{GHL22} proves the result for the structure of pure equality $(M; =)$ and Theorem 4.20 of \cite{AGThesis} proves this result in the case where $M$ is $(\Q; <)$. Theorem 4.33 of \cite{AGThesis} is precisely our Theorem \ref{omegathm} (for the case of $(\Q; <)$) and in \cite{AGThesis} this is proved  using the structure of the permutation modules $F\Q^{(n)}$ given in \cite{BFKM}. 
\end{remarks}

\begin{remarks} \rm Continue with the notation used in Remarks \ref{reduct}. We would like to know whether the separation property of Corollary \ref{sep} holds for the reduct $N$. Specifically, suppose $V \leq FW$ is $H$-invariant and $x \in FW \setminus V$. Is there a finite $S \subseteq N$ and an $H_{(S)}$-invariant function $f : W \to F$ such that $f(V) = 0$ and $f(x) \neq 0$. We know that there is a $G_{(S)}$-invariant such function (where $S$ is a support of $x$ in $M$), by Corollary \ref{sep}, but here we want $f$ to be definable in $N$. If $F$ has non-zero characteristic, we suspect that the answer is negative, so we would first suggest to look at the question in characteristic 0.
\end{remarks}

\section{Further results}

 \subsection{The definable dual}\label{defdualsec}
In this subsection  $F$ is a field, $M$ is a countable $\omega$-categorical structure, $G = \Aut(M)$ and $W$ is a sort in $M^{eq}$. So $G$ is acting continuously and oligomorphically on $W$. Some of these conditions can be weakened in what follows, but the terminology would then be less appropriate.

\begin{definition}\rm  

(i) Recall that  a subset $X \subseteq W$ is \textit{definable} (or $C$-definable) if there is a finite  $C \subseteq M$ such that $X$ is $G_{(C)}$-invariant (where the latter denotes the pointwise stabiliser of $C$ in $G$). 

(ii) A function $f \in F^W$ is \textit{definable} if it is a linear combination of characteristic functions of definable sets. In other words, there is a finite $C \subseteq M$ such that $f$ is constant on $G_{(C)}$-orbits.

(iii) Denote by $F[W]$ the (topological) subspace of $F^W$ consisting of definable functions. This is clearly $G$-invariant.
\end{definition}

Without the topology, the vector space of definable functions is also studied in \cite{GHL22} (where it is denoted by {\sc Lin}$_F(W)$). It is important to stress that we  think of $F[W]$ as an $F$-vector space with a topology and a continuous  $G$-action on it. In particular, the notion of closure on $F[W]$ includes topological closure. More precisely:

\begin{definition} \rm Suppose $Y \subseteq F^W$. We write $\cl^+(Y)$ for the smallest $G$-invariant closed subspace of $F^W$ containing $Y$. So this is the closure of $\langle Y\rangle_{FG}$ the $FG$-submodule generated by $Y$, the linear span $\langle G.Y\rangle_F$ of the union of $G$-orbits containing elements of $Y$. Note that for $f \in F^W$ we have $f \in \cl^+(Y)$ iff for every finite $S \subseteq W$ we have that $f\vert S$ is the restriction to $S$ of an element of $\langle Y\rangle_{FG}$.

If $Y \subseteq F[W]$ we write $\cl(Y) = \cl^+(Y) \cap F[W]$.
\end{definition}

\begin{remarks} \rm As $F[W]$ contains characteristic functions of points of $W$, it is dense in $F^W$. Moreover, via an identification of elements of $W$ with their characteristic functions, we can, if we wish, regard $FW$ as a subspace of $F[W]$. We will not make use of this.
\end{remarks}

Using Corollary \ref{cfgen} we have the following additional duality:

\begin{corollary} \label{defdual} Suppose that $M$ is Ramsey. Then the maps 
\[ U \mapsto U^\perp \leq  FW \]
and
\[X \mapsto X^\perp \cap F[W]\]
give an inclusion-reversing correspondence between the closed $G$-invar\-iant subspaces $U$ of $F[W]$ and $G$-submodules $X$ of $FW$.
\end{corollary}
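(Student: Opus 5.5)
We reduce everything to the full duality of Lemma \ref{duality} between $G$-submodules of $FW$ and closed $G$-submodules of $F^W$, together with Corollary \ref{cfgen}. Closedness of a subspace $U \leq F[W]$ is meant in the subspace topology, so $U$ closed in $F[W]$ means $U = \overline{U}\cap F[W]$, where $\overline{U}$ is the closure in $F^W$. The whole statement then follows once we show that the two maps
\[
U \mapsto \overline{U}, \qquad V \mapsto V \cap F[W]
\]
are mutually inverse bijections between closed $G$-invariant subspaces of $F[W]$ and closed $G$-invariant subspaces of $F^W$. Indeed, composing with Lemma \ref{duality}(iii) gives the stated maps. We have $U^\perp = \overline{U}^\perp$ since $\perp$ only sees closures; this is immediate because each condition $f(x)=0$ is a clopen condition on $f$. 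We also have $X^\perp \cap F[W] = (X^\perp)\cap F[W]$.

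First I check that the maps are well defined. $\overline{U}$ is $G$-invariant because $G$ acts by homeomorphisms, and $V\cap F[W]$ is closed in $F[W]$ and $G$-invariant since $F[W]$ is. Next, $\overline{U}\cap F[W] = U$ holds precisely because $U$ is closed in $F[W]$. The remaining identity is $\overline{V\cap F[W]} = V$ for every closed $G$-invariant $V \leq F^W$, and this is the key step. Clearly $\overline{V\cap F[W]} \leq V$.

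Suppose for contradiction that the inclusion is strict. Then $U_0 := \overline{V \cap F[W]}$ and $V$ are closed $G$-invariant subspaces with $U_0 < V$. Corollary \ref{cfgen}, which is where the Ramsey hypothesis enters, gives a finite $S\subseteq W$ and a $G_{(S)}$-invariant $h \in V\setminus U_0$. Since $W$ is a sort in $M^{eq}$, $S$ is contained in $\mathrm{dcl}^{eq}(C)$ for some finite $C\subseteq M$. Hence $G_{(C)} \leq G_{(S)}$ and $h$ is constant on $G_{(C)}$-orbits, so $h\in F[W]$. Thus $h \in V\cap F[W] \subseteq U_0$, a contradiction.

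Inclusion reversal is clear for all the maps involved, and bijectivity follows by composing the two bijections. The only real content is the application of Corollary \ref{cfgen}. The small technical point is translating a support $S\subseteq W$ to a finite parameter set $C\subseteq M$, which is routine because imaginaries are definable over finitely many real elements.
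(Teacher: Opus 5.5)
Your proof is correct and uses the same ingredients as the paper's: Lemma \ref{duality} to see that replacing $U$ by its closure in $F^W$ does not change $U^\perp$, and Corollary \ref{cfgen} combined with the fact that a $G_{(S)}$-invariant function lies in $F[W]$. The paper argues directly that each of the two maps strictly reverses inclusions. You instead factor through the bijection $U \mapsto \overline{U}$, $V \mapsto V\cap F[W]$ and isolate the density statement $\overline{V\cap F[W]} = V$; this makes the mutual-inverse property explicit, and you also spell out the passage from $S\subseteq W$ to a finite $C\subseteq M$, which the paper leaves implicit.
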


\textit{Proof:\/}  Suppose $U < V$ are closed subspaces of $F[W]$ and let $U_1, V_1$ denote their closures in $F^W$. Then $U_1^\perp = U^\perp$ (by Lemma \ref{duality}) and similarly $V_1^\perp = V^\perp$. Moreover $U_1 < V_1$, so $V_1^\perp < U_1^\perp$, as required.

Conversely suppose $X < Y$ are $G$-invariant subspaces of $FW$. Then $Y^\perp < X^\perp$ (by \ref{duality}) and Corollary \ref{cfgen} shows that $Y^\perp \cap F[W] < X^\perp \cap F[W]$, as required. $\Box$

\bigskip

We will now use some model theory to provide more information on the closure $\cl^+(f)$ for $f \in F^W$.  We have been considering $W$ as a sort in $M^{eq}$. Formally, the sort is associated to the theory of $M$ and we now want to vary the model $M$. When we do this, we will write $W = W(M)$, the elements of the sort in $M^{eq}$. If $\alpha : M \to M'$ is an elementary embedding (see Section \ref{MT}), then we obtain a map between the corresponding sorts $W(M)$ and $W(M')$ which is also elementary and which we will also denote by $\alpha$. 

In the case where $\alpha$ is an elementary embedding, we can regard $W(M) \to W(M')$ as an inclusion map, and we have a corresponding map $F^{W(M')} \to F^{W(M)}$, given by restriction. If $f \in F^{W(M)}$ is $C$-definable (for some finite $C \subseteq M$), then there is a finite partition of $W(M)$ into $C$-definable sets on which $f$ takes constant values. There is a corresponding partition of $W' = W(M')$ into $C$-definable sets and a corresponding $C$-definable function $f' \in F^{W(M')}$ with $f'\vert W = f$. As a definable object, we can, with a little care,  regard $f$ itself as an element of $M^{eq}$ and talk, for example, about its type. Note  that with this convention, the image of $f$ under the elementary embedding $\alpha$ is actually $f'$. We now consider what happens when we restrict general elements of $F[W']$ (not necessarily defined over subsets of $M$) to $W$.

\begin{definition} \label{exdef}\rm Suppose $M \preceq M'$ is a (countable)  elementary extension and let $W' = W(M')$. Suppose that $f' \in F^{W'}$ is definable  and let  $h = f' \vert W$. Then we say that $h \in F^W$ is \textit{externally definable}. 

\end{definition}

\begin{lemma}  \label{elemb} Suppose $M$ is countable, $\omega$-categorical and $W = W(M)$ is a sort in $M^{eq}$.

(i) Suppose $f \in F[W]$ and $M'$ is an elementary extension of $M$. Let $W' = W(M')$. Then $f$ extends naturally to an element of $F[W']$ (which we still denote by $f$). Let $f' \in F[W']$ have the same type over $\emptyset$ as  $f$ in $M'$ and $h = f'\vert W$. Then $h \in \cl^+(f)$. 

(ii) Suppose  $\beta : W \to W$ is an elementary embedding. Let $f \in F^W$ and  let $h \in F^W$ be given by $h(w) = f(\beta(w))$. Then $h \in \cl^+(f)$. If $f$ is definable, then $h$ is externally definable.
\end{lemma}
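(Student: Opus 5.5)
The plan is to use the characterisation of $\cl^+$ recorded after its definition: $h \in \cl^+(f)$ iff for every finite $S \subseteq W$ there is some $g \in G$ with $h\vert S = (gf)\vert S$. In fact, since $\langle f\rangle_{FG}$ contains $G.f$, it is enough to find a single translate of $f$ agreeing with $h$ on $S$. Both parts then reduce to a homogeneity argument using $\omega$-categoricity.

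For (i), suppose $f$ is $C$-definable for a finite $C \subseteq M$, and fix a finite $S \subseteq W$. Since $f' \equiv f$ in $M'$, there is (by the usual conventions on types of definable objects) a tuple $C'$ in $M'$ with $\tp(C') = \tp(C)$ such that $f'$ is the function defined from $C'$ by the same formulas that define $f$ from $C$. I would then take an enumeration $s$ of $S$ (as imaginaries, or via representing real tuples) and consider $\tp(C'/s)$ in $M'$. Because $M \preceq M'$, this type is realised in $M$ by some tuple $C''$: for each formula in the type, $\exists y\,\phi(y,s)$ holds in $M'$, hence in $M$, and by $\omega$-categoricity the type is isolated, so one formula suffices. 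Then $C'' \equiv C$ in $M$. By $\omega$-categoricity (homogeneity of $M$ for finite tuples of the same type), there is $g \in G$ with $gC = C''$. The function $gf$ is defined from $C''$ by the same formulas that define $f$ from $C$. For $w \in S$ we get $(gf)(w) = f'(w) = h(w)$, since $C''s \equiv C's$ and the value at $w$ is determined by which defining formula $\phi(w, C')$ holds. So $h \in \cl^+(f)$.

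For (ii), I read $\beta$ as induced by an elementary embedding $M \to M$. Fix a finite $S \subseteq W$. Since $\beta$ is elementary, the tuples $s$ and $\beta(s)$ have the same type over $\emptyset$. By $\omega$-categoricity, $M$ is homogeneous for finite tuples of imaginaries, so there is $g \in G$ with $g\vert S = \beta\vert S$. Then for $w \in S$ we have $(g^{-1}f)(w) = f(gw) = f(\beta w) = h(w)$, so $h$ agrees with a translate of $f$ on $S$ and $h \in \cl^+(f)$.

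For the last clause, identify $M$ with $\beta(M) \preceq M$ to obtain the abstract extension $M'$. Formally, let $M' = M$ and regard $\beta: M \to M'$ as the inclusion of an elementary substructure. Then $h = f\circ\beta$ is exactly the restriction of the definable $f \in F[W(M')]$ to the copy of $W(M)$, which is externally definable by Definition \ref{exdef}.

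The main obstacle is the bookkeeping in (i): making precise that $f'$ is given by a parameter tuple $C'$ of the same type as $C$, and that the values of the definable function on $S$ depend only on $\tp(C'/S)$. Once that is set up, the pull-back into $M$ via elementarity and isolation of types is routine.
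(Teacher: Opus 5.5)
Your proposal is correct and follows essentially the same route as the paper. In (i) you pull $C'$ back to some $C''\subseteq M$ with the same type over the finite set $S$; the paper cites $\omega$-saturation where you use isolation of types, which amounts to the same thing. You then move $C$ to $C''$ by an automorphism. In (ii) you match $\beta$ by an automorphism on each finite set, and you regard $\beta$ as the inclusion of an elementary extension to get external definability.
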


\textit{Proof:\/} (i) We may suppose that $f, f'$ are defined over the finite sets $C \subseteq M$ and $C' \subseteq M'$, with $C, C'$ of the same type over $\emptyset$.  As $f$ is essentially defined by a formula with parameters in $C$, it  extends naturally to an element of $F[W']$, defined by the same formula. Let $X$ be a finite subset of $W$. As $M \preceq M'$ (and using the $\omega$-saturation of $M$ coming from $\omega$-categoricity) there is $C'' \subseteq M$ such that $C'', C'$ have the same type over $X$. Let $f''$ be the translate over $C''$ of $f'$ (i.e. the type of $f'C'$ is the same as the type of $f''C''$). Then $f'' \vert X = f' \vert X = h \vert X$. Moreover, as $f'', f$ are defined over parameter sets in $W$ with the same type, they have the same type in $M$ over $\emptyset$, so we have $f'' \in \langle f\rangle_{FG}$. It follows that $h \in \cl^+(f)$, as required.

(ii) Let $X$ be a finite subset of $W$. As $\beta$ is an elementary embedding and $M$ is strongly $\omega$-homogeneous, there is $\alpha \in \Aut(M)$ with $\beta\vert X = \alpha\vert X$. So for all $x \in X$ we have $h(x) = (\alpha^{-1}f)(x)$. Thus $h \in \cl^+(f)$.  For the external definability, change the notation slightly and regard $\beta$ as the inclusion map of an elementary extension $W \preceq W'$, and $f \in F[W']$. Then $h$, as defined, is the restriction of $f$ to $W$, so it is externally definable. $\Box$

\subsection{A (possible) reduction for a.c.c.} \label{acc}

First, we note the following  observations about an extremely amenable group acting on compact topological modules. (All spaces are assumed to be Hausdorff.)

\begin{lemma} \label{exactness} Suppose $G$ is an extremely amenable topological group. If $A$ is a continuous $G$-module (meaning that $A$ is a topological abelian group and the action $G \times A \to A$ is continuous), let $A^G = \{ a\in A : ga = a \,\mbox{ for all } g \in G\}$. 
\begin{enumerate}
\item[(i)] Let $0 \to A \stackrel{\alpha}{\to} C \stackrel{\beta}{\to} B \to 0$ be an exact sequence of compact, continuous $G$-modules (and continuous homomorphisms). Then the induced sequence $0 \to A^G \to C^G \to B^G \to 0$ is also exact.
\item[(ii)] Suppose $A$ is a continuous $G$-module and $A_1, \ldots , A_n$ are compact submodules of $A$. Then $(A_1 + \ldots + A_n)^G = A_1^G + \ldots + A_n^G$.
\end{enumerate}

\end{lemma}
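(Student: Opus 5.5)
For (i), left exactness and the inclusion of images are formal. Since $\alpha$ is injective, so is its restriction $A^G \to C^G$. Since $\alpha$ and $\beta$ are $G$-equivariant, $\alpha(A^G) \subseteq C^G$ and $\beta(C^G) \subseteq B^G$, and the composite $A^G \to B^G$ is zero. For exactness at $C^G$, take $c \in C^G$ with $\beta(c) = 0$. Then $c = \alpha(a)$ for a unique $a \in A$. For $g \in G$ we have $\alpha(ga) = g\alpha(a) = gc = c = \alpha(a)$, so $ga = a$ by injectivity, and hence $a \in A^G$.

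The substantive step, and the only place extreme amenability is needed, is surjectivity of $C^G \to B^G$. Given $b \in B^G$, set $Y = \beta^{-1}(b) \subseteq C$. This is non-empty because $\beta$ is surjective. It is closed because $\beta$ is continuous and $B$ is Hausdorff, so $Y$ is compact. It is $G$-invariant because $\beta(gc) = g\beta(c) = gb = b$ for $c \in Y$. The action of $G$ on $Y$ is the restriction of the continuous action on $C$, so it is continuous. Extreme amenability of $G$ therefore gives a fixed point $c \in Y$, that is, $c \in C^G$ with $\beta(c) = b$. This is the same argument as in the finite-field proof of Corollary \ref{cfgen} and in Proposition \ref{hcf}.

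For (ii), the inclusion $A_1^G + \cdots + A_n^G \subseteq (A_1+\cdots+A_n)^G$ is clear, so only the reverse inclusion needs proof. Let $P = A_1 \times \cdots \times A_n$ with the product topology and the diagonal $G$-action. Then $P$ is a compact continuous $G$-module. The map $\sigma : P \to A$, $(a_1,\ldots,a_n) \mapsto a_1+\cdots+a_n$, is a continuous $G$-homomorphism, using continuity of addition in $A$. Its image $D = A_1+\cdots+A_n$ is compact, being a continuous image of a compact space. Its kernel $K$ is a closed, hence compact, $G$-submodule of $P$. This gives an exact sequence
\[ 0 \to K \to P \to D \to 0 \]
of compact continuous $G$-modules, where the action on $D$ is continuous as a restriction of the action on $A$.

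By (i), the map $P^G \to D^G$ is surjective. Moreover $P^G = A_1^G \times \cdots \times A_n^G$, since the action on $P$ is coordinatewise. Hence every $d \in D^G$ can be written as $a_1+\cdots+a_n$ with each $a_i \in A_i^G$, which is the required inclusion.

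The one point I would check carefully is the topological bookkeeping in (i): the Hausdorff assumption ensures the fibre $\beta^{-1}(b)$ is closed and therefore compact. Beyond that, the whole argument is a single application of the fixed-point property.
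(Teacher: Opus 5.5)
Your proof is correct. Part (i) is essentially the paper's argument. The paper identifies $B$ with $C/A$ and applies extreme amenability to the coset $c+A$, which is exactly your fibre $\beta^{-1}(b)$. Working with the fibre directly spares you the (harmless) topological identification of $B$ with $C/A$.

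Part (ii) is where you take a different route. The paper argues directly. It reduces to $n=2$, writes a fixed $a$ as $a_1+a_2$, and applies extreme amenability to the non-empty, compact, $G$-invariant set $A_1 \cap (a+A_2)$ to obtain a fixed $a_1'$. It then sets $a_2' = a - a_1'$. You instead make (ii) a formal consequence of (i), via the sum map $\sigma \colon A_1 \times \cdots \times A_n \to A_1 + \cdots + A_n$.

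The fixed-point set is really the same in both arguments: for $n=2$, projecting to the first coordinate identifies $\sigma^{-1}(a)$ with $A_1 \cap (a+A_2)$. So the difference is one of packaging.
\begin{itemize}
\item Your version handles all $n$ at once with no induction, and shows that (ii) is an instance of (i). The cost is setting up the product module and checking that $\ker\sigma$ and the image $D$ are compact.
\item The paper's version is shorter and more hands-on. However, its reduction to $n=2$ tacitly uses that $A_1+\cdots+A_{n-1}$ is a compact submodule, which is the same fact you verify explicitly for $D$.
\end{itemize}
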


\textit{Proof:\/} (i) Exactness of $0 \to A^G \to C^G \to B^G$ is true in general. Indeed,  if $c \in C^G$ is in $\ker\beta$,  then by exactness of the original sequence there is a unique $a \in A$ with $\alpha(a) = c$. As  $c\in C^G$, we have $a \in A^G$. For the rest, we may assume $A \leq C$ and $B = C/A$. It remains to show that if $c + A \in B^G$, then we may take $c \in C^G$. But $c+A$ is a compact, non-empty, $G$-invariant subset of $C$, so this follows immediately from extreme amenability.

(ii) It suffices to prove this in the case $n = 2$. Clearly $A_1^G+A_2^G \subseteq (A_1 + A_2)^G$. Let $a \in (A_1 + A_2)^G$ and write $a = a_1 + a_2$ with $a_i \in A_i$. So $a_1 \in A_1 \cap (a+A_2)$. This set is therefore a non-empty, compact, $G$-invariant subset of $A_1$, so there is $a_1' \in A_1 \cap (a+A_2)$ which is fixed by $G$. Then $a_2' = a - a_1' \in A_2$ is fixed by $G$ and so $a \in A_1^G+A_2^G$. $\Box$

\bigskip

Now suppose throughout this section that $M$ is a countable $\omega$-categorical structure and $F$ is a field. Let $G = \Aut(M)$.
Suppose $W \subseteq M^{eq}$ is a finite union of $G$-orbits $W_1,\ldots, W_r$. The generalised augmentation map $\Omega : FW \to F^r$ is the $FG$-homomorphism  $\sum_{w \in W} \alpha_w w \mapsto (\ldots, \sum_{w \in W_i} \alpha_w w, \ldots)$. Denote the kernel of this by $\Aug(FW)$.

\bigskip

The main result here is:

\begin{theorem} \label{cyclic} Suppose that $M$ is  an $\omega$-categorical Ramsey structure and $W \subseteq M^{eq}$ is a finite union of $G$-orbits. Then any finitely generated submodule of $\Aug(FW)$ is cyclic (that is, generated by a single element). 
\end{theorem}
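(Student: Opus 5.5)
The plan is to reduce, by induction on the number of generators, to showing that for $x, y \in \Aug(FW)$ there is $z \in FW$ with $\langle x, y\rangle_{FG} = \langle z\rangle_{FG}$. Any such $z$ lies in $\langle x,y\rangle_{FG} \le \Aug(FW)$, so the induction continues. I will take $z = x + gy$ for a suitably chosen $g \in G$. Clearly $z \in \langle x,y\rangle_{FG}$. Conversely, once $x \in \langle z\rangle_{FG}$ we get $gy = z - x \in \langle z\rangle_{FG}$, hence $y \in \langle z\rangle_{FG}$. So the whole problem is to choose $g$ so that $x \in \langle z \rangle_{FG}$.

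For this I use Theorem \ref{omegathm} with $X = \langle z\rangle_{FG}$ and $S = \supp(x)$. It suffices to have $\Omega_S(x) \in \Omega_S(X)$, and since $z \in X$ it is enough to arrange
\[\Omega_S(gy) = 0, \quad\text{so that}\quad \Omega_S(z) = \Omega_S(x).\]
Let $W_1,\ldots, W_r$ be the $G$-orbits on $W$. The condition $\Omega_S(gy)=0$ says that, for each $G_{(S)}$-orbit $O \subseteq W_i$, the coefficients of $gy$ on $O$ sum to $0$. Since $y \in \Aug(FW)$, the coefficients of $y$ (and hence of $gy$) on each $W_i$ sum to $0$. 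Therefore it suffices to find $g$ such that, for each $i$, all elements of $g(\supp(y)) \cap W_i$ lie in a single $G_{(S)}$-orbit, i.e.\ have the same type over $S$. Then each $W_i$-block of $gy$ sits inside one $G_{(S)}$-orbit and its coefficient sum, which is $0$, is the only contribution to that orbit.

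The main step, and the expected obstacle, is producing such a $g$ from extreme amenability. The $G_{(S)}$-orbits give a finite colouring $c$ of $W$. Let $T = \supp(y)$ and consider the space $Y$ of all colourings $\kappa : W \to [\ell]$ that lie in the closure of the $G$-orbit of $c$ in $[\ell]^W$. This is compact, and $G$ acts on it continuously. By extreme amenability $Y$ has a $G$-fixed point $\kappa_0$, and a $G$-fixed colouring is constant on each orbit $W_i$. Since $\kappa_0$ is in the orbit closure of $c$, there is $h \in G$ such that $hc$ agrees with $\kappa_0$ on the finite set $T$. Thus $c$ is constant on $h^{-1}(T) \cap W_i$ for each $i$, and $g = h^{-1}$ works.

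This is the usual Kechris--Pestov--Todor\v cevi\'c oscillation argument. I expect the only care needed is in checking continuity of the action on $[\ell]^W$, which follows since $W$ is discrete with continuous $G$-action, and in checking that $c$ has finitely many colours. The latter is where $\omega$-categoricity is used, as $S$ is finite. With $g$ so chosen, the preceding paragraph completes the proof.
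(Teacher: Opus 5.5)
Your proof is correct, and it takes a genuinely different route from the paper's.

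\textbf{The paper's route.} It combines Lemma \ref{l2} and Lemma \ref{l1}.
\begin{enumerate}
\item Lemma \ref{l2} uses independent elementary submodels $M_1,\ldots,M_k\preceq M$. It moves each generator $v_i$ into $M_i$ and shows that for $X=\langle v_1+\cdots+v_k\rangle_{FG}$, the group $G$ acts trivially on $V/X$. This holds for any finitely generated $V\le FW$.
\item Lemma \ref{l1} is proved via Pontryagin duality and the fixed-point exactness of Lemma \ref{exactness}. It bounds by $r$ the total dimension of the $G$-trivial factors in any chain of submodules of $FW$.
\item The trivial factor $FW/\Aug(FW)$ already has dimension $r$. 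So the chain $X\le V\le \Aug(FW)\le FW$ forces $V=X$.
\end{enumerate}

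\textbf{Your route.} You reduce to two generators and use Theorem \ref{omegathm} directly as the membership test. You then invoke extreme amenability a second time, for $G$ itself, in the standard orbit-closure argument. This chooses a translate $gy$ whose support meets each $G$-orbit $W_i$ inside a single $G_{(\supp x)}$-orbit, so that $\Omega_S(gy)=0$. Note that this step uses only $y\in\Aug(FW)$.

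\textbf{What each approach buys.} Your argument is more self-contained: it needs neither the independence result nor Lemmas \ref{l1} and \ref{l2}. It also gives an explicit generator $x+gy$. In fact it proves slightly more: $\langle x,y_1,\ldots,y_m\rangle_{FG}$ is cyclic whenever $y_1,\ldots,y_m\in\Aug(FW)$, with $x\in FW$ arbitrary. The paper's route yields lemmas of independent use in exchange. Lemma \ref{l2} applies to arbitrary finitely generated submodules of $FW$, and to non-Ramsey structures admitting such independent submodels. Lemma \ref{l1} is needed again in the proof of Corollary \ref{acccor}.
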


As a corollary we obtain:

\begin{corollary} \label{acccor} Suppose that $N$ is a reduct of a countable $\omega$-categorical Ramsey structure and $W \subseteq N^{eq}$ is a finite union of $G$-orbits. Suppose that $FW$ does not satisfy acc. Then there exists an infinite ascending chain of  cyclic submodules $X_1 < X_2 < \ldots $ in $FW$. 
\end{corollary}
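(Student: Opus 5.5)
The idea is to start from an infinite ascending chain of finitely generated $FH$-submodules, where $H=\Aut(N)$, and pick new elements lying in the $G$-augmentation submodule, where $G=\Aut(M)\leq H$. Theorem \ref{cyclic}, applied to $G$, then lets us replace finitely many such elements by a single generator.

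\emph{Setup.} Let $M$ be the Ramsey structure with reduct $N$, and set $G=\Aut(M)$, $H=\Aut(N)$, so $G\leq H$. Since $W$ is a finite union of $H$-orbits and $G$ is oligomorphic, $W$ is also a finite union of $G$-orbits $O_1,\ldots,O_s$. Let $\Omega:FW\to F^s$ be the generalised augmentation map for $G$, so its kernel is $\Aug(FW)$ computed with respect to $G$. This kernel is only a $G$-submodule in general, and the argument is arranged so that this does not matter.

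\emph{Step 1: a chain of finitely generated modules.} Because $FW$ fails acc as an $FH$-module, there is a strictly ascending chain $Z_1<Z_2<\cdots$ of finitely generated $FH$-submodules. (Take a non-finitely generated submodule $Z$ and successively adjoin elements of $Z$ outside the current span.)

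\emph{Step 2: new elements in the augmentation kernel.} Put $Z_i'=Z_i\cap\Aug(FW)$. Then $Z_i/Z_i'$ embeds in $F^s$, and these dimensions are non-decreasing in $i$. If $Z_i'=Z_{i-1}'$ while $Z_{i-1}<Z_i$, then $\dim Z_i/Z_i'>\dim Z_{i-1}/Z_{i-1}'$. This can happen for at most $s$ indices. Passing to a subsequence, we may therefore assume $Z_{i-1}'<Z_i'$ for all $i$, and choose $y_i\in Z_i'\setminus Z_{i-1}$. Note that we never need $Z_i'$ itself to be finitely generated.

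\emph{Step 3: making the modules cyclic.} For each $i$, the $FG$-module $\langle y_1,\ldots,y_i\rangle_{FG}$ is a finitely generated submodule of $\Aug(FW)$. By Theorem \ref{cyclic} applied to $M$ and $G$, it equals $\langle c_i\rangle_{FG}$ for a single element $c_i$. Define
\[ X_i=\langle y_1,\ldots,y_i\rangle_{FH}. \]
Since $FH\cdot FG=FH$, we get
\[ X_i = FH\cdot\langle y_1,\ldots,y_i\rangle_{FG} = FH\cdot\langle c_i\rangle_{FG}=\langle c_i\rangle_{FH}, \]
so $X_i$ is a cyclic $FH$-submodule.

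\emph{Step 4: strictness.} Clearly $X_{i-1}\leq X_i$. Moreover $X_{i-1}\leq Z_{i-1}$, because each $y_j$ with $j\leq i-1$ lies in $Z_j\leq Z_{i-1}$ and $Z_{i-1}$ is $H$-invariant. On the other hand $y_i\in X_i$ but $y_i\notin Z_{i-1}$. Hence $X_{i-1}<X_i$, and $X_1<X_2<\cdots$ is the required infinite ascending chain of cyclic submodules.

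\emph{Main obstacle.} The delicate point is the mismatch between the two groups: Theorem \ref{cyclic} is available only for $G$ and for the $G$-augmentation kernel, which need not be $H$-invariant. The plan handles this in two ways. First, the new generators $y_i$ are chosen inside the $G$-kernel; the finite-dimensionality of $F^s$ in Step 2 is what allows this while keeping the chain strict. Second, passing from $FG$-cyclicity to $FH$-cyclicity is immediate, since the $FH$-span of an $FG$-module generated by $c$ is just $\langle c\rangle_{FH}$.
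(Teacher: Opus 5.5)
Your proof is correct and is essentially the paper's argument. You choose new elements of the chain inside the $G$-augmentation kernel $\Aug(FW)$, use Theorem \ref{cyclic} to turn the $FG$-span of the first $i$ of them into a cyclic $FG$-module, and then pass to the $FH$-span. The one variation is in Step 2: your direct count uses only that $\Aug(FW)$ has codimension $s$ in $FW$, whereas the paper invokes Lemma \ref{l1} and the lemma on finite-dimensional sections to assume each $U_{i+1}/U_i$ is infinite-dimensional. Your version is slightly more elementary, and it shows that the Ramsey hypothesis enters only through Theorem \ref{cyclic}.
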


The theorem will follow from two lemmas, which may be of independent interest. Throughout, $W\subseteq M^{eq}$ will be a finite union of $G$-orbits. We say that $G$ acts trivially on a module if it fixes every element of it.

\begin{lemma}\label{l1}  Suppose that $M$ is Ramsey and $G$ has $r$ orbits on $W$. Suppose that 
$ V_1 < V_2 < \ldots < V_t \leq FW$ are $FG$-submodules. Then 
\[ \sum\{ \dim_F(V_{i+1}/ V_i) : G \mbox{ is trivial on } V_{i+1}/V_i \mbox{  and } i < t\} \leq r.\]
\end{lemma}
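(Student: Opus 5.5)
Dualise and use the exactness in Lemma \ref{exactness}. For a subspace $V \le FW$, write $V^\perp \le F^W$. By Lemma \ref{duality}, the chain $V_1 < \ldots < V_t$ gives a descending chain of closed $G$-invariant subspaces
\[F^W \geq V_1^\perp > V_2^\perp > \ldots > V_t^\perp.\]
The space $(F^W)^G$ of $G$-invariant functions consists of functions constant on each $G$-orbit $W_1,\ldots,W_r$. It therefore has dimension exactly $r$. The plan is to show that for each $i$ with $G$ trivial on $V_{i+1}/V_i$ we have
\[\dim\bigl((V_i^\perp)^G/(V_{i+1}^\perp)^G\bigr) \geq \dim_F(V_{i+1}/V_i).\]
Summing over $i$ along the chain of subspaces $(V_i^\perp)^G$ of the $r$-dimensional space $(F^W)^G$ then gives the bound.

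Fix such an $i$. The pairing $V_i^\perp/V_{i+1}^\perp \times V_{i+1}/V_i \to F$ is nondegenerate: this is finite-dimensional duality after restricting to a finite set, as in Lemma \ref{duality}. So if $d = \dim(V_{i+1}/V_i)$ is finite, we may choose $y_1,\ldots,y_d \in V_{i+1}$ independent modulo $V_i$. If $V_{i+1}/V_i$ is infinite-dimensional, we instead take any finite number $d$ of such $y_j$ and obtain a contradiction once $d > r$. Since $G$ acts trivially on $V_{i+1}/V_i$, we have $gy_j - y_j \in V_i$ for all $g \in G$.

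We want $G$-invariant $h_1,\ldots,h_d \in V_i^\perp$ with $h_k(y_j) = \delta_{jk}$. These $h_k$ are then independent modulo $V_{i+1}^\perp$, and indeed modulo any function vanishing on the $y_j$.

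The key step is the existence of these invariant $h_k$, and this is where extreme amenability enters. First try the approach of Corollary \ref{cfgen}. Consider
\[Y_k = \{h \in V_i^\perp : h(y_j) = \delta_{jk} \mbox{ for all } j\}.\]
This set is nonempty by the duality above, and it is closed. It is $G$-invariant: for $h \in Y_k$ we have $(gh)(y_j) = h(g^{-1}y_j) = h(y_j)$, because $g^{-1}y_j - y_j \in V_i$ and $h$ vanishes on $V_i$.

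If $F$ is finite, $Y_k$ is compact, so extreme amenability of $G$ gives a fixed point and we are done.

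For general $F$, compactness fails, and this is the main obstacle. I would handle it in one of two ways. The first is to pass to Pontryagin duals as in Proposition \ref{hcf}: work with $\hat F^W$, which is compact, and carry out the construction there. The second is to argue on the module side using Theorem \ref{omegathm} with $S = \emptyset$. Here $\Omega_\emptyset = \Omega : FW \to F^r$ and $G_{(\emptyset)} = G$.

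The module-side argument goes as follows. We need a linear functional $v$ on $F^r$ with $v\cdot\Omega(V_i) = 0$ and $v\cdot\Omega(y_j) = \delta_{jk}$. By Theorem \ref{omegathm}, any $y \in \langle y_1,\ldots,y_d\rangle$ with $\Omega(y) \in \Omega(V_i)$ lies in $V_i$. Hence $\Omega(y_1),\ldots,\Omega(y_d)$ are linearly independent modulo $\Omega(V_i)$ in $F^r$. This requires $\langle V_i, y\rangle$ to be $G$-invariant, which holds because $G$ is trivial on $V_{i+1}/V_i$; more simply, apply Theorem \ref{omegathm} directly to the $G$-invariant subgroup $V_i$.

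Linear algebra in $F^r$ then gives the required $v$, and hence the functions $h_k = \sum_l v_l \mathbf{1}_{W_l}$. Even more directly, the images $\Omega(V_1) \le \Omega(V_2) \le \ldots$ form a chain in $F^r$ in which each trivial step increases the dimension by at least $\dim(V_{i+1}/V_i)$. Non-trivial steps do not decrease the dimension. This yields the bound $r$ immediately, and it is the version I would write up, since it avoids the compactness issue entirely.
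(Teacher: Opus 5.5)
Your module-side argument is correct in substance, but its key step rests on a citation that does not literally apply. Theorem \ref{omegathm} with $S=\emptyset$ requires $u$ to have support contained in $\emptyset$. By Definition \ref{support} this means $u$ is fixed by $G_{(\emptyset)}=G$. Your $y\in\langle y_1,\ldots,y_d\rangle$ is only fixed modulo $V_i$, and a genuinely $G$-fixed vector of $FW$ must have support a union of finite $G$-orbits. So ``apply Theorem \ref{omegathm} directly to the $G$-invariant subgroup $V_i$'' is wrong as stated. Taking $S=\supp(y)$ instead would only relate $y$ to the finer map $\Omega_S$, and that loses the bound $r$.

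What you actually need is the following variant: if $X\le FW$ is $G$-invariant and $gu-u\in X$ for all $g\in G$, then $u\in X$ iff $\Omega(u)\in\Omega(X)$. This is true, and the proof of Proposition \ref{hcf} goes through verbatim. The nonempty compact set $H=\{h\in\widehat{F}^W:(h,X)=\overline{0},\ (h,u)=\delta\}$ is still $G$-invariant, because $(gh,u)=(h,g^{-1}u)=(h,u)$ whenever $h$ annihilates $X$. A $G$-fixed $k\in H$ then has $(k,u)=\delta\neq\overline{0}$. On the other hand, $(k,z)=[k]_\emptyset*\Omega(z)$ forces $(k,u)=\overline{0}$ if $\Omega(u)\in\Omega(X)$. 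You should state and check this variant explicitly. Once you do, your chain $\Omega(V_1)\le\cdots\le\Omega(V_t)$ in $F^r$ proves the lemma, including the case of infinite-dimensional trivial quotients.

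With that fix, your route differs from the paper's in packaging rather than substance. The paper proceeds as follows:
\begin{itemize}
\item it refines the chain so that trivial steps are one-dimensional;
\item it passes to $\Ann(V_i)\le\widehat{F}^W$;
\item it uses Lemma \ref{exactness}(i) to get $\Ann(V_i)^G/\Ann(V_{i+1})^G\cong\widehat{F}$ at trivial steps;
\item it dualizes the chain of fixed points inside $(\widehat{F}^W)^G\cong\widehat{F}^r$ back into a chain of quotients of $F^r$.
\end{itemize}
A $G$-invariant $k$ pairs with $z$ only through $\Omega(z)$, so $\Ann(V_i)^G$ is identified with the annihilator of $\Omega(V_i)$. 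Hence the paper's final chain is exactly $F^r\to F^r/\Omega(V_1)\to\cdots\to F^r/\Omega(V_t)$. Your argument is the undualized form of the paper's, with the same extreme-amenability input: a $G$-fixed point in a compact coset in $\widehat{F}^W$. Your first suggested alternative, working in $\widehat{F}^W$, is essentially the paper's route.

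Your version gives a shorter write-up, with no refinement step and no second dualization. It also gives the more explicit conclusion that $\Omega$ embeds $V_{i+1}/V_i$ into $\Omega(V_{i+1})/\Omega(V_i)$ at each trivial step. The paper's version instead isolates a general exactness principle for fixed points on compact modules that makes no reference to $\Omega$.
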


\textit{Proof:\/} It suffices to prove this under the additional assumption that if $G$ is trivial on $V_{i+1}/V_i$, then $\dim_F(V_{i+1}/V_i) = 1$. By Pontryagin duality (see Section \ref{dd}) and working in the dual $\widehat{FW} = \hF^W$, we have 
\[ \Ann(V_t) < \Ann(V_{t-1}) < \ldots < \Ann(V_1) \leq \hF^W\]
and  $(\Ann(V_i)/\Ann(V_{i+1}))$ is isomorphic to the dual $\widehat{(V_{i+1}/V_i)}$. Moreover, by our additional assumption, $G$ is trivial on $V_{i+1}/V_i$ if and only if $\Ann(V_i)/\Ann(V_{i+1})$ is isomorphic to $\hF$. Note that on all of these abelian groups we have an $FG$-action.

Now,  if $\Ann(V_i)/\Ann(V_{i+1}) \cong\hF$, then we have an exact sequence $0 \to \Ann(V_{i+1}) \rightarrow \Ann(V_i) \to \hF\to 0$, where the first map is inclusion. Taking $G$-fixed elements and using Lemma \ref{exactness} we have exactness of $0\to(\Ann(V_{i+1}))^G \rightarrow (\Ann(V_i))^G \to \hF\to 0$. Taking fixed points in the above sequence of inclusions we have
\[ \Ann(V_t)^G \leq  \Ann(V_{t-1})^G \leq \ldots \leq \Ann(V_1)^G \leq (\hF^W)^G \cong \hF^r\]
(where the isomorphism at the end is from the fact that the $G$-fixed functions $W \to \hF$ are the ones which are constant on each $G$-orbit on $W$). Each of these is $F$-invariant, so when we take Pontryagin duals they are $F$-vector spaces and the inclusions become surjective maps
\[ (\Ann(V_t)^G)^{\widehat{\,}} \leftarrow  (\Ann(V_{t-1})^G)^{\widehat{\,}} \leftarrow  \ldots \leftarrow  (\Ann(V_1)^G)^{\widehat{\,}} \leftarrow   \widehat{(\hF^r)} \cong F^r . \]
The maximum number of times where these surjections are not isomorphisms is $r$ and therefore the maximum number of $i$ for which we have  $\Ann(V_i)/\Ann(V_{i+1}) \cong \hF$ is $r$. The result follows. $\Box$

\begin{lemma} \label{l2} Suppose $M$ is a Ramsey structure. Let $V = \langle v_1,\ldots, v_k \rangle_{FG}$ be a finitely generated submodule of $FW$. Then there is a cyclic submodule $X = \langle x\rangle_{FG} \leq V$ such that $G$ acts trivially on $V/X$.
\end{lemma}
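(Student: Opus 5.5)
The plan is to reduce to two generators and then build the single generator $x$ as a ``generic'' combination, using Theorem \ref{omegathm} to verify membership. Note first that $G$ acts trivially on $V/X$ exactly when $(g-1)v \in X$ for all $g\in G$ and $v \in V$. Since $V$ is generated by $v_1,\ldots,v_k$, it suffices that $(g-1)v_j\in X$ for all $g$ and $j$.

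\textbf{Reduction to $k=2$.} Suppose the case $k=2$ is known. Given $V=\langle v_1,\ldots,v_k\rangle_{FG}$, first find a cyclic $\langle y\rangle_{FG}\le\langle v_1,\ldots,v_{k-1}\rangle_{FG}$ with trivial action on the quotient. Then apply the two-generator case to $\langle y,v_k\rangle_{FG}$ to get $x$. The quotient $V/\langle x\rangle_{FG}$ has a two-step filtration with trivial action on both layers, but that only gives unipotent, not trivial, action. So the induction hypothesis must be applied carefully. The better route is to run the two-generator construction below directly with $k$ generators, choosing $x=v_1+g_2v_2+\cdots+g_kv_k$ with the translates chosen generically one after another.

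\textbf{Choice of $x$.} Let $A=\supp(v_1)$. Consider the space of $G$-translates of $v_2$, or more precisely the Stone space of types $\tp(g v_2/M)$ extending $\tp(v_2/\emptyset)$ in an elementary extension. This is a compact space on which $G_{(A)}$ acts continuously. By Theorem \ref{openisea}, $G_{(A)}$ is extremely amenable, so there is a $G_{(A)}$-invariant global type $p$ of a translate of $v_2$. For each finite $T\subseteq M^{eq}$ containing $A$, choose $g_2$ so that $g_2v_2$ realises $p\vert T$. In fact only finitely many finite sets $T$ will matter: namely $T=A\cup hA$ as $h$ ranges over representatives for the finitely many $G_{(A)}$-orbits of $hA$, by $\omega$-categoricity. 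So one choice of $g_2$ works for all of them simultaneously.

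\textbf{Verification.} Fix $h\in G$ and put $u=hv_1-v_1$, whose support lies in $T=A\cup hA$. By Theorem \ref{omegathm} it is enough to find $y\in\langle x\rangle_{FG}$ with $\Omega_T(y)=\Omega_T(u)$. Take $y=h'x-h''x$, where:
\begin{itemize}
\item[(a)] $h'$ agrees with $h$ on $A$ and $h''$ fixes $A$;
\item[(b)] $h'g_2v_2$ and $h''g_2v_2$ lie in the same $G_{(T)}$-orbit.
\end{itemize}
Then $\Omega_T(h'g_2v_2)=\Omega_T(h''g_2v_2)$, so these terms cancel and $\Omega_T(y)=\Omega_T(u)$. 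To achieve (b), take $b^*$ realising $p\vert(T\cup\supp(g_2v_2))$. By $G_{(A)}$-invariance of $p$, we have $\tp(hA,b^*)=\tp(A,g_2v_2)$ (apply $h$ composed with an element of $G_{(A)}$) and $\tp(A,b^*)=\tp(A,g_2v_2)$. Homogeneity then gives $h'$ and $h''$ sending $g_2v_2$ to $b^*$ with the required behaviour on $A$. The same argument handles $(h-1)g_2v_2$, with the roles of $A$ and $\supp(g_2 v_2)$ interchanged. This needs $p$ to be chosen symmetrically, i.e.\ $\tp(v_1, g_2v_2)$ generic from both sides, and that is where I expect the main difficulty. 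Finally, $(h-1)v_2=(h-1)g_2^{-1}(g_2v_2)$ is a difference of such terms, so it lies in $X$ as well.

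\textbf{Main obstacle.} The hard part is securing this two-sided genericity. One-sided invariance, from a $G_{(A)}$-invariant type, handles the translates of $v_1$ cleanly. For the other generator I would try to use the product of the two type spaces and a single fixed point of $G_{(A)}\cap G_{(B)}$, with $B=\supp(v_2)$. If that fails, the fallback is Proposition \ref{hcf}: show that any $G_{(T)}$-invariant $k\in\Ann(\langle x\rangle)$ kills $(h-1)v_2$, using Lemma \ref{exactness} on the relevant compact annihilators.
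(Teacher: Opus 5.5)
\textbf{There is a genuine gap.} The step your verification rests on is false as stated. You claim that $G_{(A)}$-invariance of $p$ gives $\tp(hA,b^*)=\tp(A,g_2v_2)$. But invariance under $G_{(A)}$ gives no relation between $p\vert A$ and $p\vert hA$ when $h\notin G_{(A)}$.

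In fact the genericity you impose is vacuous. $X=\langle v_1+g_2v_2\rangle_{FG}$ depends only on $\tp(A,g_2v_2)=p\vert A$. Moreover, every complete type over $A$ extending $\tp(v_2/\emptyset)$ is the restriction of some $G_{(A)}$-invariant type over $M$: apply extreme amenability of $G_{(A)}$ to the compact set of its extensions. So your recipe allows an arbitrary translate of $v_2$, and it fails. Take $M=(\mathbb{Q};<)$, $W=M$, $v_1=a$, $v_2=-b$. The type over $M$ of the translate $-a$ is $G_{(A)}$-invariant (with $A=\{a\}$) and forces $x=0$, although $V=F\mathbb{Q}$.

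Non-realised types are no better. Take $W=M^2$, $v_1=(a_1,a_2)$, $v_2=-(b_1,b_2)$ with $a_1<a_2$ and $b_1<b_2$. The $G_{(A)}$-invariant type ``$y_1=a_1$, $y_2=+\infty$'' gives $x=(a_1,a_2)-(a_1,c)$. This lies in the kernel of the $G$-map $FW\to FM$, $(s,t)\mapsto s$, so $V/X$ maps onto $F\mathbb{Q}$.

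Separately, for $k>2$ you give no argument at all: you drop the induction and only announce a direct construction.

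\textbf{The missing idea} is to demand invariance under all of $G$. This is available because $G$ itself is extremely amenable and acts continuously on the compact space of types over $M$ extending $\tp(v_2/\emptyset)$. Suppose $p$ is $G$-invariant, $w=g_2v_2\models p\vert A$ and $h\in G$, and choose $w'\models p\vert(A\cup hA)$.
\begin{itemize}
\item[(a)] $w'$ is a $G_{(A)}$-translate of $w$, so $v_1+w'\in Gx$.
\item[(b)] $\tp(A,w')=\tp(hA,w')$, so some $h'\in G$ agrees with $h$ on $A$ and fixes $w'$.
\item[(c)] Hence $(h-1)v_1=(h'-1)(v_1+w')\in X$.
\end{itemize}
Theorem \ref{omegathm} is not needed for this. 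Nor is the two-sided genericity you flag as the main obstacle, since $(h-1)w=(h-1)x-(h-1)v_1$.

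For $k$ generators the same telescoping works if each $w_{j+1}$ realises the restriction of a $G$-invariant type to $\supp(v_1)\cup\supp(w_2)\cup\cdots\cup\supp(w_j)$. One then gets $(h-1)(v_1+w_2+\cdots+w_j)\in X$ for every $j$.

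Alternatively, your induction does go through. $V/\langle x\rangle_{FG}$ is spanned by finitely many images, hence finite-dimensional. The (unlabelled) lemma proved just before the proof of Corollary \ref{acccor} shows that $G$ acts trivially on every finite-dimensional subquotient of $FW$, so unipotence is harmless.

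The paper reaches the same end by a different route. It quotes Proposition~3.17 of [KS] to obtain elementary submodels $M_1,\ldots,M_k\preceq M$ such that tuples of $M_i$ with the same type have the same type over $\bigcup_{j\neq i}M_j$. It places $v_i$ in $M_i$ and takes $x=v_1+\cdots+v_k$. This two-sided genericity lets each $v_i$ move within $M_i$ while the others stay fixed. Arbitrary translates are then reached by realising $\tp(v_i''/v_i)$ inside $M_i$.
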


\textit{Proof:\/} By Proposition 3.17 of \cite{KS}, there exist elementary submodels $M_1,\ldots, M_k \preceq M$ such that for all $i \leq k$ and finite tuples $a, b$ in $M_i$ (or $M_i^{eq}$) with $a \equiv_\emptyset b$ we have that $a, b$ have the same type over $\bigcup_{i\neq j} M_j$. (This is proved for $k = 2$ in the above reference, but of course we can then use this repeatedly to get the general case.)

By applying suitable elements of $G$, we  may assume that $v_i$ is in $M_i$ (or more precisely, that $\supp(v_i) \subseteq W(M_i)$). Let $x = v_1 + v_2 + \ldots + v_k$ and $X = \langle x\rangle_{FG}$. If $v_i' \in M_i$ and $v_i' \equiv_\emptyset v_i$, then there is $g \in G$ with $gv_i = v_i'$ and $gv_j = v_j$ for all $j \neq i$. Thus $v_i'-v_i = gx - x \in X$. If $v_i'' \in M$ and $v_i'' \equiv_\emptyset v_i$, then there is $v_i' \in M_i$ with $v_iv_i'' \equiv_\emptyset v_iv_i'$. It follows that for every $h \in G$ we have $hv_i-v_i \in X$. Thus $G$ is trivial on $V/X$. $\Box$

\begin{remarks} \rm Of course, there are structures which are not Ramsey, but where one can find  $M_i$ as used in the proof. The result also holds in these cases.
\end{remarks}

\textit{Proof of Theorem \ref{cyclic}:\/}  Let $r$ be the number of $G$-orbits on $W$.

Let $V \leq \Aug(FW)$ be a f.g. $FG$-submodule and let $X \leq V$ be a cyclic submodule such that $G$ is trivial on $V/X$, as guaranteed by Lemma \ref{l2}. Apply Lemma \ref{l1} to the sequence of modules 
\[ X \leq V \leq \Aug(FW) \leq FW.\]
As $\dim_F(FW/\Aug(FW)) = r$ and $G$ is trivial on $V/X$ we conclude that $X = V$. $\Box$

Before proving the Corollary, we note the following (which may be well-known and holds more generally).

\begin{lemma} Let $M$ be an $\omega$-categorical Ramsey structure and $G = \Aut(M)$. Suppose $W$ is a finite union of $G$-orbits on $M^{eq}$ and $U \leq V \leq FW$ are $FG$-submodules such that $V/U$ has finite $F$-dimension. Then $G$ acts trivially on $V/U$.
\end{lemma}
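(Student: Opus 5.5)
Since $V/U$ is a finite-dimensional $F$-vector space on which $G$ acts by linear maps, I would first show that the action of $G$ on $V/U$ is continuous with respect to the discrete topology on $V/U$. The action of $G$ on $FW$ is continuous with $FW$ discrete, so each element of $V$ has open stabiliser in $G$. Choose $v_1,\ldots,v_d \in V$ whose images form a basis of $V/U$. The kernel $K$ of the action of $G$ on $V/U$ contains $\bigcap_i G_{v_i}$, which is open. So $K$ is an open normal subgroup of $G$, and $G/K$ embeds in $GL_d(F)$.

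The main step is to show that $G$ has no proper open normal subgroup, because then $K = G$ and we are done. I would argue via extreme amenability, as follows. Let $K$ be an open normal subgroup. Then $G/K$ is a discrete group. Moreover $G/K$ is finite: $K \geq G_{(A)}$ for some finite $A$, and by $\omega$-categoricity there are only finitely many $K$-cosets, since each is a union of cosets of $G_{(A)}$ and these correspond to images of $A$. That count alone is not finite, so instead I use the following cleaner argument.

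Let $G$ act on the compact Hausdorff space $G/K$ (discrete) by left translation. This action is continuous because $K$ is open. If $K$ has finite index, then $G/K$ is compact, and extreme amenability gives a fixed point $gK$. Then $hgK = gK$ for all $h$, so $G \leq gKg^{-1} = K$. Therefore I need finiteness of the index, or some other compact space.

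To get finiteness of the index without that, I would use the finite-dimensionality directly. Let $G$ act on the projective space $\mathbb{P}(\widehat{V/U})$, or more simply use the Pontryagin dual. Take $A = \widehat{V/U} \cong \hF^d$, which is compact, and on which $G$ acts continuously. Apply Lemma~\ref{exactness}, or rather extreme amenability repeatedly: for each nonzero $G$-invariant compact subgroup we obtain fixed points. I expect this route to be messy, so I prefer the following alternative.

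Each $v_i$ has finite support, and $G_{(A)} \leq K$ for a finite $A \subseteq M$. The $K$-cosets are determined by $g\vert A$ modulo $K$. The normality of $K$, together with oligomorphicity, should give finite index: $K$ contains $G_{(gA)}$ for all $g$, hence contains the group generated by these. By the Ramsey/\Fraisse property, with a rigid age and an ordering present, the group generated by $G_{(A)}$ and $G_{(B)}$ for $A, B$ disjoint with $A \equiv B$ should be all of $G$. This is plausibly a known fact about Ramsey structures, since rigidity and the existence of independent copies give this generation.

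The main obstacle is therefore establishing finite index, or equivalently showing that $G$ has no proper open normal subgroup. As a fallback, I would use the compact space $Y$ of complete flags, or of lines, in $(V/U)^\ast$. This is a finite union of finite-dimensional projective varieties, but it is compact only when $F$ is finite. For general $F$, I would instead apply Lemma~\ref{exactness}(i) to $\widehat{V/U}$ to produce $d$ independent $G$-fixed characters, giving triviality of the action.
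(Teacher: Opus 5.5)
\textbf{There is a genuine gap: you never prove that the kernel has finite index, and that is the crux.}

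Your overall strategy is the paper's. The kernel $K$ of the action on $V/U$ contains $\bigcap_i G_{v_i}$, so it is an open normal subgroup. If $[G:K]<\infty$, then $G/K$ is a finite discrete (hence compact) $G$-space, and extreme amenability gives a fixed coset $gK$. This forces $G=gKg^{-1}=K$.

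You flag the finite-index step yourself as the main obstacle, but none of your alternative routes closes it.
\begin{itemize}
\item The generation claim is unproved and, as stated, false. Take any Ramsey structure with a $\emptyset$-definable equivalence relation $E$ having infinitely many infinite classes, for example the generic convexly ordered equivalence relation. If $a\neq b$ lie in the same $E$-class, then $G_a$ and $G_b$ both stabilise that class setwise. Hence $\langle G_a,G_b\rangle\neq G$.
\item The Pontryagin fallback does not work either. Lemma~\ref{exactness}(i) only lifts fixed points along exact sequences; it does not produce nonzero fixed characters. For infinite $F$, the set of nonzero characters in $\widehat{V/U}$ is not closed, hence not compact. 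So extreme amenability may only hand you the zero character.
\item The flags argument is only needed for finite $F$, and there you were already done: $G/K$ embeds in the finite group $GL_d(F)$.
\end{itemize}

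The missing idea is the standard fact that an open normal subgroup of an oligomorphic group has finite index. Your abandoned first attempt was one step away from it: count $K$-orbits, not $G_{(A)}$-cosets.
\begin{itemize}
\item Let $a$ enumerate a finite $A\subseteq M$ with $G_{(A)}\leq K$.
\item By normality, $K(ga)=g(Ka)$.
\item Moreover, $gKa=g'Ka$ if and only if $g^{-1}g'\in KG_{(A)}=K$.
\item So $gK\mapsto K(ga)$ is a bijection from $G/K$ onto the set of $K$-orbits on $Ga$.
\item Each $K$-orbit on $Ga$ is a union of $G_{(A)}$-orbits.
\item By $\omega$-categoricity, $G_{(A)}$ has only finitely many orbits on $M^{|A|}$, so $[G:K]<\infty$.
\end{itemize}
This is what the paper means when it says $G_1$ is a clopen normal, hence oligomorphic, subgroup and concludes that $G/G_1$ is profinite and compact. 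With this step in place, your argument is complete.
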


\textit{Proof of Lemma:\/} Let $G_1$ be the kernel of the action of $G$ on $V/U$. So $G_1$ is a closed normal subgroup of $G$. Now, the stabiliser of a vector  $v+U \in V/U$ is open in $G$ as it contains the stabiliser of the support of $v$. As $V/U$ is finite dimensional, it follows that $G_1$ is also open (it is the stabiliser of a basis). Thus $G_1$ is a clopen normal subgroup of $G$ and in particular, it is also oligomorphic on $M$. It follows that $G/G_1$ is a profinite group and so is compact. But $G$ is extremely amenable, so by considering its action on the coset space $G/G_1$, we deduce that $G_1 = G$, as required. $\Box$

\textit{Proof of Corollary \ref{acccor}:\/} Suppose that $N$ is a reduct of the $\omega$-categorical Ramsey structure $M$ and $W \subseteq N^{eq}$ is a finite union of $H$-orbits, where $H = \Aut(N)$. Let $G = \Aut(M)$. So $G \leq H$ and by $\omega$-categoricity, $W$ is also a finite union of $G$-orbits. 

Suppose $U_0 < U_1 < U_2 < \ldots \leq FW$ is an infinite ascending chain of $FH$-modules. These are also $FG$-modules, so by the above lemma and Lemma \ref{l1} we may assume that $U_{i+1}/U_i$ is infinite dimensional for all $i$. Let $Z = \Aug_{FG}(FW)$. This has finite codimension in $FW$ and so $Z\cap U_i$ is of finite codimension in $U_i$. Thus $Z\cap U_i < Z\cap U_{i+1}$ for all $i$. 

For $i \geq 1$, let  $u_i \in Z \cap U_i \setminus U_{i-1}$ and $V_i' = \langle u_1,\ldots, u_i\rangle_{FG}$. In particular, $V_i' \leq Z$, so by Theorem \ref{cyclic}, there is $x_i \in V_i'$ which generates $V_i'$. As $u_i \not\in U_{i-1}$,  we have $x_i \in U_i \setminus U_{i-1}$. Let $V_i = \langle x_i \rangle_{FH}$. So $V_i \subseteq U_i \setminus U_{i-1}$ and it follows that  $V_1 < V_2 < \ldots \leq FW$ is an infinite ascending chain of cyclic $FH$-modules. $\Box$

\bigskip\bigskip\bigskip

\renewcommand{\bibsection}{\begin{flushright}\Large
\textls[150]
{
REFERENCES}\\
\rule{8cm}{0.4pt}\\[0.8cm]
\end{flushright}}

\bigskip\bigskip


\begin{flushleft}
\rule{8cm}{0.4pt}\\
\end{flushleft}

{
\sloppy
\noindent
David M. Evans

\noindent 
Department of Mathematics

\noindent 
Imperial College London

\noindent 
London SW7~2AZ

\noindent 
UK.

\noindent
e-mail: david.evans@imperial.ac.uk
}

\bigskip
\bigskip

\hypertarget{last_page}{}\label{last_page}

\end{document}